\documentclass{amsart}

\usepackage{amsmath,amsthm,amssymb,enumerate,mdframed,lipsum,xypic,fancyhdr,comment}
\usepackage{graphicx}
\usepackage{circuitikz}
\usepackage{float}
\usepackage{todonotes}
\usepackage{hyperref}
\usepackage[capitalize,nameinlink]{cleveref}
\usepackage[foot]{amsaddr}

\Crefname{proposition}{Proposition}{Propositions}
\Crefname{lemma}{Lemma}{Lemmas}
\Crefname{theorem}{Theorem}{Theorems}
\Crefname{corollary}{Corollary}{Corollaries}
\Crefname{definition}{Definition}{Definitions}

\usepackage{appendix}

\usepackage[backend=bibtex]{biblatex}
\addbibresource{References.bib}

\newcommand{\tinjsup}{t_{\inj}^{\sup}}
\newcommand{\tinj}{t_{\inj}}
\newcommand{\thom}{t_{\hom}}

\newcommand{\N}{\mathbb{N}}

\newcommand{\E}{\mathbb{E}\hspace{0.5mm}}

\newcommand{\GG}{\mathcal{G}}

\DeclareMathOperator{\Hom}{Hom}

\DeclareMathOperator{\inj}{inj}

\DeclareMathOperator{\ex}{ex}

\theoremstyle{plain}
\newtheorem{thm}{Theorem}[section]
\newtheorem{lem}[thm]{Lemma}
\newtheorem{prop}[thm]{Proposition}

\theoremstyle{definition}
\newtheorem{dfn}[thm]{Definition}
\newtheorem{rem}[thm]{Remark}
\newtheorem{claim}[thm]{Claim}
\newtheorem{cor}[thm]{Corollary}

    \title{Maximizing Subgraph Counts in Regular Graphs}
\author{Gabor Lippner \and Arturo Ortiz San Miguel}
\address{Department of Mathematics, Northeastern University}
\thanks{email: \texttt{g.lippner@northeastern.edu, ortizsanmiguel.a@northeastern.edu}}

\begin{document}

 \begin{abstract}
    Given a graph \(H\), we investigate the \(d\)-regular graphs \(G\) with the highest \(H\)-density via a novel spectral method. The key idea is to bound homomorphism numbers (which are typically \emph{non-spectral}) by spectral polynomials that are sharp for specific graphs. After relating injective homomorphism numbers from \(H\) to homomorphism numbers from quotient graphs of \(H\), this allows us to reframe the problem as a continuous optimization problem on the eigenvalues of \(G\).  
    
    For bipartite \(H\) and \(d\) large enough, we show \(G\) consists of disjoint copies of \(K_{d,d}\). For non-bipartite \(H\) and \(d\) sufficiently large, \(G\) is a collection of disjoint \(K_{d+1}\) graphs. For \(H=C_5\) and \(d=3\), disjoint Petersen graphs emerge.
   
    This positively answers a question from Cambie et al~\cite[Question 24]{cambie2022turan} regarding the generalized Tur\'an numbers $\ex(n,C_k,K_{1,d+1})$.
\end{abstract}

\maketitle

\bigskip
\noindent MSC Classification (2020): 
05C35 (Primary); 05C60, 05C50 (Secondary).  

\noindent Keywords: Extremal graph theory, regular graphs, homomorphism numbers, spectral graph theory.

\section{Introduction} \label{sec:intro}

Understanding the extremal behavior of subgraph counts in regular graphs, such as maximizing the number of triangles, cliques, or cycles is a fundamental question in extremal combinatorics. These problems are closely tied to the structure theory of graphs, pseudo-randomness, and spectral graph theory. They also serve as a lens through which we can better understand how local graph properties constrain global patterns.

Consider the case of triangles: in an \(n\)-vertex, \(d\)-regular graph, any node can be contained in at most \(\tbinom{d}{2}\) triangles. So the whole graph cannot have more than \(n/3 \cdot \tbinom{t}{2}\) and clearly equality is only achievable when each component of the graph is isomorphic to \(K_{d+1}\). A similar argument shows that the same is true for \(K_r\) subgraphs for any \(r \leq d\). On the other hand, for 4-cycles the answer is different. It is easy to see that an edge can be contained in at most \((d-1)^2\) 4-cycles, and this can only happen when the edge is in a component isomorphic to \(K_{d,d}\). So in order for a graph to have the largest number of \(4\)-cycles on average, it has to consist of disjoint copies of \(K_{d,d}\). 

In this paper we study the general question: given a fixed graph \(H\), which \(d\)-regular graphs have the ``most" copies of \(H\)? Of course, the subgraph count needs to be suitably normalized. Following~\cite{lovasz2012large}, we denote
\[ \inj(H,G) = \mbox{ the number of injective homomorphisms from \(H\to G\)}\] and refer to 
\[ \tinj(H,G) = \frac{\inj(H,G)}{|V(G)|}\] as the \(H\)-density of \(G\). This is the normalization that is best adapted to the case of bounded-degree graphs. 

Let \(\GG_d\) denote the set of all \(d\)-regular graphs. Our main goal can now be phrased as follows: for a fixed \(H\) and \(d > 0\), determine 
\[ \tinjsup (H) := \sup\{ \tinj(H,G) : G \in \GG_d\}\] and classify all graphs where the supremum is achieved. 

Based on the cases explained above, one might be led to believe that the answer is always the same: the optimal graphs must consist  of disjoint copies of \(K_{d+1}\) or of \(K_{d,d}\). The simplicity and intuitiveness of this answer perhaps explains why, until recently, such a basic question received so little attention in the literature.  However, it turns out that the situation is more complicated. Already for the case of the 5-cycle the answer depends on \(d\). For \(d \geq 7\) the above guess is indeed the right answer, but for \(d=3\) the optimal graphs consist of disjoint copies of the Petersen graph. And while for \(d=4,5,6\) we haven't been able determine the answer, we do know that copies of \(K_{d+1}\) are not optimal for $d=4,5$, and for $d=6$ there are other graphs that achieve the same density as $K_7$. 

\subsection{Related work}

The earliest reference we found where a somewhat similar question was studied is an unpublished note by Rivin~\cite{rivin_note}. There, a bound for the number of closed walks in a graph is given as a function of the number of its edges, and it is shown that the extremum is achieved when the edges form a clique.

A few works deal with the extremal behavior of more complicated parameters in regular graphs: Zhao~\cite{ZHAO_2010} studies the number of independent sets in $d$-regular graphs, and shows that this is maximized for unions of complete $d$-regular bipartite graphs, whereas Galvin~\cite{galvin_2015} shows that the number of $q$-colorings among $d$-regular graphs cannot asymptotically exceed that of a union of $K_{d,d}$s. 

Another connection to currently active research directions arises from casting the problem\footnote{The authors would like to thank Daniel Gerbner for pointing out this connection, and reference~\cite{cambie2022turan} in particular.} in the language of generalized Tur\'an numbers $\ex(n,T,H)$. Introduced by Alon and Shikhelman in~\cite{alon2016turan}, $\ex(n,T,H)$ denotes the maximum possible number of copies of $T$ in an $H$-free graph on $n$ vertices. It is a natural generalization of the classical Tur\'an number that considers $T=K_2$, that is, maximizing the number of edges in $H$-free graphs. 

This area has since received substantial attention. The recent survey~\cite{gerbner2026survey} contains an exhaustive list of known results for various choices of $T$ and $H$. In particular, Cambie, Verclos, and Kang~\cite{cambie2022turan} study the case of $H = K_{1,d+1}$, which simply restricts the graphs in question to maximum degree $\leq d$, and determine its value when $T$ is a tree.

\begin{thm}[\cite{cambie2022turan}, Proposition 22]\label{thm:tree_main}
Let \(H\) be a tree with maximum degree $\Delta(H)$ and diameter $q$. Then for any \(d \geq \Delta(H)\) the quantity $\ex(n,H,K_{1,1+d})$ is maximized by any $d$-regular graph with girth at least $q+1$. Equivalently, 
\[ \tinj(H,G) = \tinjsup(H) \Longleftrightarrow \mbox{ the girth of \(G\) is greater than the diameter of \(H\)}.\]
\end{thm}

Furthermore, \cite[Question 24]{cambie2022turan} asks whether, for a fixed $k$ and sufficiently large $d$, the quantity $\ex(n,C_k,K_{1,d+1})/n$ is maximized at disjoint unions of $K_{d,d}$ (when $k$ is even) and disjoint unions of $K_{d+1}$ (when $m$ is odd). 

Kirsch and Nir~\cite{kirsch2024turan} study the asymptotic behavior of $\ex(n,H,K_{1,d+1})$ for graphs $H$ that have a dominating vertex (which include cliques but not cycles). They show (in our terminology) that asymptotically $\ex(n,H,K_{1,d+1})/n$ is bounded by $\tinj(H,K_{d+1})$

We provide a positive answer to the question above, and a far reaching generalization and strengthening of the Kirsch-Nir result by showing that for arbitrary connected graphs $H$ that are not trees, for large enough $d$ the quantity $\ex(n,H,K_{1,d+1})/n$ has a unique connected maximizer: $K_{d,d}$ when $H$ is bipartite and $K_{d+1}$ when it is not. 

%%%%%%%%%%%%%%%%%%
\subsection{Results}
%%%%%%%%%%%%%%%%%%

Throughout this paper, unless stated otherwise, all graphs \(G = (V,E)\) are finite and simple. 
 all components of a graph \(G\) are isomorphic to a (connected) graph \(H\) then we will often refer to \(G\) as consisting of \emph{(disjoint) copies of \(H\)}.

Our main theorem confirms the intuition stated above when \(d\) is large enough. The answer is different depending on whether \(H\) is a tree (which was already known, see Theorem~\ref{thm:tree_main}) a non-tree bipartite graph, or a non-bipartite graph.

\begin{thm}\label{thm:bip_main}
    Let \(H\) be a connected bipartite graph that is not a tree. Then there is a \(d_0 = d_0(H)\) such that for all \(d \geq d_0\) and all \(G \in \GG_d\) we have 
    \[ \tinj(H,G) = \tinjsup(H) \Longleftrightarrow \mbox{\(G\) consists of copies of \(K_{d,d}\) }.\]
\end{thm}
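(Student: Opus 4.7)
The plan is to compare $\tinj(H, G)$ with $\tinj(H, K_{d,d})$ term by term via Möbius inversion on the partition lattice of $V(H)$. Writing the bipartition of $H$ as $A \sqcup B$ with $|A|=a$, $|B|=b$, $v:=|V(H)|=a+b$, a direct count gives
\[
\tinj(H, K_{d,d}) = \frac{d!^2}{d \, (d-a)!\,(d-b)!} = d^{v-1} - \left(\binom{a}{2}+\binom{b}{2}\right)d^{v-2} + O(d^{v-3}).
\]
For a general $G \in \GG_d$ on $n$ vertices, one has
\[
\inj(H, G) = \sum_{\pi} \mu(\pi)\, \hom(H/\pi, G),
\]
summed over partitions $\pi$ of $V(H)$ whose blocks are independent in $H$ (so that $H/\pi$, treated as a multigraph, is loopless). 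These independent partitions split into two types: those whose blocks lie entirely within $A$ or within $B$, keeping $H/\pi$ bipartite; and those containing a mixed block, which typically creates an odd cycle in $H/\pi$, so that $\hom(H/\pi, K_{d,d}) = 0$.

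The leading contribution (trivial partition) is $\hom(H, G)/n$, which I expect to control via the key spectral inequality
\[
\hom(C_{2k}, G) = \sum_i \lambda_i^{2k} \leq d^{2k-2} \sum_i \lambda_i^2 = n d^{2k-1},
\]
tight iff every $\lambda_i \in \{-d, 0, d\}$, i.e., $G$ is a disjoint union of $K_{d,d}$'s. For bipartite $H$, this should propagate to a bound $\hom(H, G)/n \leq d^{v-1}$ by decomposing $H$ into tree-like and cycle-like pieces: tree pieces contribute exactly $d^{|E|}$ because $\mathbf{1}$ is the Perron eigenvector of $A$, while cycle pieces are dominated spectrally as above. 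Each Möbius term attached to a non-trivial partition $\pi$ has a quotient $H/\pi$ with fewer vertices, hence contributes at most $O(d^{v-2})$; summed with the appropriate Möbius coefficients, these should reproduce precisely $-\left(\binom{a}{2}+\binom{b}{2}\right)d^{v-2}+O(d^{v-3})$ at $G=K_{d,d}$, matching the expansion of the target.

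The main obstacle is the treatment of non-spectral intermediate quantities. For instance, already for $H = K_{2,3}$ the expansion produces $\hom(K_{2,3}, G) = \sum_{u_1,u_2}(A^2)_{u_1, u_2}^3$, which is not a function of the spectrum alone; one bounds it by $d \cdot \Tr(A^4)$ via the pointwise estimate $(A^2)_{u_1, u_2} \leq d$, and this is sharp at $K_{d,d}$ since there $(A^2)_{u_1, u_2} \in \{0, d\}$. In general, every non-spectral quantity appearing in the Möbius expansion must be dominated by a spectral polynomial in the eigenvalues $\{\lambda_i\}$ that is tight on the spectrum $\{d,-d,0,\ldots,0\}$; this is the content of the homomorphism inequalities in \cref{sec:hom_ineq}. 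The ``mixed-block'' Möbius terms require extra care, since they vanish at $K_{d,d}$ but are generally nonzero for non-bipartite $G$, so their net contribution must be shown to be non-positive (or at worst a lower-order correction absorbable into the spectral slack). Once everything is reduced to a polynomial in the normalized spectrum, the final step is the constrained optimization of \cref{sec:opt}, and the threshold $d\geq d_0(H)$ enters because the victory margin of $K_{d,d}$ is only of order $d^{v-2}$, which has to dominate all lower-order slack in the spectral and non-spectral inequalities applied to the many correction terms.
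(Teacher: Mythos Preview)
Your high-level strategy---reduce $\tinj(H,G)$ to a spectral polynomial tight at $K_{d,d}$, then invoke the optimization of \cref{sec:opt}---matches the paper. But your decomposition is different from the one actually carried out, and it has two concrete gaps.

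First, the ``mixed-block'' terms. You correctly note that partitions $\pi$ with a block meeting both sides of $H$ produce non-bipartite quotients $H/\pi$, that $\hom(H/\pi,K_{d,d})=0$, and that these terms ``require extra care''---but you do not supply the care. The signed sum $\sum_{\pi\text{ mixed}}\mu_\pi\hom(H/\pi,G)$ has no obvious reason to be non-positive, nor to be $O(d^{v-3})$ and hence absorbable; individual terms are of order $d^{v-2}$. The paper does \emph{not} bound these terms. Instead it uses the bipartite double cover: one shows that any even polynomial bound valid for bipartite $G$ automatically extends to all $G$, because $\inj(H,\tilde G)\geq 2\,\inj(H,G)$ and $\sigma(\tilde G)=\{\pm\lambda:\lambda\in\sigma(G)\}$. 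After this reduction the non-bipartite quotients vanish identically and the issue disappears. This is a genuine idea your sketch is missing.

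Second, your direct use of the M\"obius formula $\inj(H,G)=\sum_\pi\mu_\pi\hom(H/\pi,G)$ forces you to find, for \emph{every} $\pi$, a spectral bound on $\hom(H/\pi,G)$ of the correct sign (upper when $\mu_\pi>0$, lower when $\mu_\pi<0$) that is sharp at $K_{d,d}$. You illustrate the upper bound for $K_{2,3}$ but say nothing about the lower bounds. The paper avoids this by a different decomposition: it first uses the trivial $\inj(H,G)\leq\inj(H',G)$ for a unicyclic spanning subgraph $H'$ (so the leading term $\hom(H',G)=d^{v-k}\sum_\lambda\lambda^k$ is already spectral), and then applies \cref{prop:hom to inj} in the form $\inj(H',G)=\hom(H',G)-\sum_P\inj(H'/P,G)$ recursively. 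This recursion only ever needs lower bounds on $\hom$ (supplied by \cref{lem:hom_ineq}\eqref{part:3}) and upper bounds on $\inj$ (supplied by induction), never the reverse. The inequalities in \cref{sec:hom_ineq} are tailored to this recursion, not to the term-by-term M\"obius bounding you describe, so your appeal to that section does not close the gap.
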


\begin{thm}\label{thm:odd_cycles_main}
    Let \(H\) be a connected non-bipartite graph. Then there is a \(d_0 = d_0(H)\) such that for all \(d \geq d_0\) and all \(G \in \GG_d\) we have 
    \[ \tinj(H,G) = \tinjsup(H) \Longleftrightarrow \mbox{\(G\) consists of copies of \(K_{d+1}\) }.\]
\end{thm}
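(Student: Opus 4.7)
The plan is to adapt the blueprint of \cref{thm:bip_main} to the non-bipartite setting, using the spectral framework of \cref{sec:spectral}. First, I would apply the M\"obius inversion
\[
\tinj(H, G) \;=\; \sum_{\pi} \mu(\pi)\,\thom(H/\pi, G),
\]
where $\pi$ ranges over partitions of $V(H)$ that do not identify adjacent vertices, $H/\pi$ is the resulting simple quotient graph, and $\mu$ is the associated M\"obius coefficient. Each $\thom(H/\pi, G)$ would then be split, as in \cref{sec:spectral}, into a \emph{spectral part} expressible as a polynomial in the cycle homomorphism densities $\thom(C_k, G) = \tfrac{1}{|V(G)|}\sum_i \lambda_i^k$, and a \emph{non-spectral remainder} encoding local joint-degree statistics of $G$.

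The key spectral comparison is that $G^{*} := \bigsqcup K_{d+1}$ strictly maximizes $\thom(C_k, G)$ over $\GG_d$ for each odd $k \geq 3$: since the spectrum of $G^{*}$ is contained in $\{d, -1\}$, a direct computation gives $\thom(C_k, G^{*}) = \tfrac{d^k - d}{d+1}$, and any $G \in \GG_d$ whose spectrum is not contained in $\{d, -1\}$ yields a strictly smaller value. Non-bipartiteness of $H$ guarantees that odd power sums genuinely appear in the M\"obius expansion, since $H$ itself, as well as at least one relevant quotient $H/\pi$, contains an odd cycle. For even $k$, $G^{*}$ is sub-optimal for $\thom(C_k, \cdot)$ only by lower-order corrections in $d$ (the true maximizer being $\bigsqcup K_{d,d}$), so the odd contributions will dominate.

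The technical core is then to organize the M\"obius expansion by its polynomial degree in $d$. The plan is to show that, once $d \geq d_0(H)$ for a threshold $d_0$ depending on combinatorial invariants of $H$, the leading contribution to $\tinj(H, G)$ is controlled by an odd-power-sum quantity whose maximum over $\GG_d$ is uniquely attained at $G^{*}$. The polynomial-optimization estimates of \cref{sec:opt} should provide sharp enough bounds on the non-spectral remainders to ensure that no combination of lower-order terms (or terms with negative M\"obius coefficient) can overturn this leading comparison. Uniqueness of the maximizer would then follow from the rigidity of the odd-power-sum bound: any single eigenvalue of $G$ outside $\{d,-1\}$ strictly decreases $\thom(C_k, G)$ for each odd $k\geq 3$.

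The main obstacle I anticipate is controlling the non-spectral remainders uniformly across all the quotients $H/\pi$. These remainders depend on joint-degree or co-degree statistics of $G$ rather than on its eigenvalues alone, and the M\"obius coefficients $\mu(\pi)$ can be of either sign, so the cancellations are delicate. The strategy would be to show that the leading $d$-dependence of every non-spectral term can be absorbed into a spectral quantity that is maximized at $G^{*}$, leaving only a residue that becomes negligible for $d$ large. Making these residues line up simultaneously across all quotients, for an arbitrary non-bipartite $H$, is where the polynomial optimization of \cref{sec:opt} does the heavy lifting, and is the main reason one cannot hope for a uniform $d_0$ independent of $H$.
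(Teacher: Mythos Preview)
Your plan has the right spectral philosophy but is missing the structural step that actually makes the argument close. You propose to start from the M\"obius inversion of \cref{prop:inj to hom}, $\tinj(H,G)=\sum_\pi \mu(\pi)\,\thom(H/\pi,G)$, and then bound each summand by something spectral and sharp at $K_{d+1}$. The immediate obstacle is the leading term: the trivial partition contributes $\thom(H,G)$ with coefficient $+1$, so you need a spectral \emph{upper} bound on $\hom(H,G)$ that is sharp at $K_{d+1}$. No such bound is supplied anywhere---\cref{lem:hom_ineq} only gives spectral \emph{lower} bounds on $\hom$---and the ``split into spectral part plus non-spectral remainder'' you attribute to \cref{sec:spectral} is not something that section actually carries out. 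The phrase ``absorb the leading $d$-dependence of every non-spectral term into a spectral quantity maximized at $G^*$'' is precisely the hard step, and you have not said how to do it.

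The paper takes a different route that avoids ever needing an upper bound on $\hom$. It first uses the monotonicity $\inj(H,G)\le\inj(H',G)$ for $H'\le H$ a spanning unicyclic subgraph whose unique cycle is odd; this is sharp at $K_{d+1}$ by \cref{lem:hom_ineq}\,\eqref{part:1}. It then applies the \emph{other} inversion, \cref{prop:hom to inj}, to $H'$, writing $\inj(H',G)=\hom(H',G)-\sum_P\inj(H'/P,G)$. Now the top term $\hom(H',G)=d^{\,n-k}\sum_{\lambda}\lambda^k$ is automatically spectral with $k$ odd, because $H'$ is unicyclic with an odd cycle. After one further unfolding, every $\hom$ term carries a minus sign, so only spectral \emph{lower} bounds on $\hom(H'/P,G)$ are required, and those are exactly what \cref{lem:hom_ineq}\,\eqref{part:4} provides; the residual $\inj$ terms on strictly smaller graphs are handled by induction. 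Your outline also leaves open the possibility that the dominant spectral contribution has an even $\lambda$-exponent, in which case $K_{d,d}$ beats $K_{d+1}$ at leading order rather than by a ``lower-order correction in $d$''; the reduction to an odd-cycle unicyclic $H'$ is precisely what forces the leading exponent to be odd and makes \cref{lem:odd_opt} applicable.
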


\begin{rem} It is worth noting that unless \(H\) is a tree, graphs that exactly maximize the \(H\)-density only exist for certain values of \(n\). Specifically, the optimal density can only be achieved for \(n=2kd\) for bipartite \(H\) and \(n= k(d+1)\) for non-bipartite \(H\). For large \(n\) that does not have the required number of vertices, we will see that the optimal graphs still have the same clique structure that covers all but a vanishingly small portion of their nodes. 
\end{rem}

From these results we can easily derive a positive answer to~\cite[Question 24]{cambie2022turan} in an even more general setting.

\begin{cor}\label{cor:q24}
    Let $H$ be a connected graph that is not a tree. Then, for $d$ sufficiently large, the unique connected maximizer of $\ex(n,H,K_{1,d+1})/n$ is $K_{d,d}$ when $H$ is bipartite, and $K_{d+1}$ when $H$ is not biparite. 
\end{cor}

\begin{proof}
    When $G$ is $d$-regular, the statement is trivially equivalent to Theorems~\ref{thm:bip_main} and~\ref{thm:odd_cycles_main}. Thus it remains to show the (intuitively obvious) fact that a non-regular $G$ cannot maximize $\tinj(H,G)$. Let $Y \subsetneq V(G)$ be the set of nodes that have the smallest degree in $G$.  Create a new graph $G'$ by taking two disjoint copies of $G$ and joining the two sets  corresponding to $Y$ in each copy by a matching. (When $G$ was bipartite, we can choose $G'$ to be bipartite by reversing the roles of the two parts in the copy, so the corresponding parts of $Y$ have the same size and can be matched.) Clearly, $G'$ is still connected and has maximum degree $\Delta(G') \leq d$. Furthermore, $\tinj(H,G) \leq \tinj(H,G')$ trivially. However, the minimum degree of $G'$, $\delta(G')$, is strictly bigger than $\delta(G)$. So after repeating this procedure $d-\delta(G)$ times, we get a connected $d$-regular graph $\hat{G}$ such that $\tinj(H,G) \leq \tinj(H, \hat{G})$. Since $\hat{G}$ cannot be either $K_{d+1}$ or $K_{d,d}$ (as those are the smallest $d$-regular, respectively bipartite $d$-regular, graphs) we see from the first case that $\tinj(H,G) \leq \tinj(H, \hat{G}) < \tinj(H,K_{d+1}/K_{d,d})$ as claimed.    
\end{proof}

Along the way we also study the (much simpler) problem of determining the maximum density of closed walks in \(d\)-regular graphs.  This is trivial for walks of even length (where the optimizers consist of copies of \(K_{d,d}\) for all \(d\)), but we can obtain a similarly general result for walks of odd length. We even show a version where only walks starting at a specific vertex are counted.
\begin{prop}\label{prop:vertexwise}
    For any \(d\geq 2\), the \(d\)-regular graph with the most closed \((2k+1)\)-walks containing a given vertex is \(K_{d+1}\).
\end{prop}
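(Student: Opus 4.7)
The plan is to reframe the question spectrally and reduce it to a polynomial inequality on $[-d, d]$. Let $A$ denote the adjacency matrix of $G$, with orthonormal eigenbasis $\{\phi_i\}$ and eigenvalues $\{\lambda_i\} \subseteq [-d,d]$. Define the spectral measure $\mu_v := \sum_i \phi_i(v)^2 \delta_{\lambda_i}$, a probability measure on $[-d,d]$ whose moments satisfy $\int \lambda^\ell\, d\mu_v = (A^\ell)_{vv}$. Since $G$ has no self-loops and is $d$-regular, the first two moments are pinned down: $\int \lambda\, d\mu_v = 0$ and $\int \lambda^2\, d\mu_v = d$. The goal is thus to maximize the $(2k+1)$-st moment over all such $\mu_v$ arising from $d$-regular graphs.

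The next step is to construct a quadratic dual certificate. Let $R_k(\lambda) = A_k + B_k \lambda + C_k \lambda^2$ be the unique quadratic determined by $R_k(d) = d^{2k+1}$, $R_k(-1) = -1$, and $R_k'(-1) = 2k+1$. These conditions ensure that $R_k(\lambda) - \lambda^{2k+1}$ has $d$ as a simple root and $-1$ as a double root; equivalently, $R_k$ agrees with $\lambda^{2k+1}$ on the support $\{d, -1\}$ of the $K_{d+1}$-spectral measure $\tfrac{1}{d+1}\delta_d + \tfrac{d}{d+1}\delta_{-1}$. Once the inequality $R_k(\lambda) \geq \lambda^{2k+1}$ on $[-d, d]$ is established, integrating against $\mu_v$ yields $(A^{2k+1})_{vv} \leq A_k + C_k d$, which a short computation from the defining conditions identifies with $(d^{2k+1} - d)/(d+1)$, the value attained by $K_{d+1}$.

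The main technical work is the polynomial inequality $R_k(\lambda) \geq \lambda^{2k+1}$ on $[-d, d]$, which I propose to establish by induction on $k$. The base case $k=1$ is the identity $R_1(\lambda) - \lambda^3 = (d-\lambda)(\lambda+1)^2$, visibly nonnegative on $[-d,d]$. For the inductive step, matching coefficients in the division gives $R_{k+1}(\lambda) - \lambda^2 R_k(\lambda) = (\lambda - d)(\lambda+1)^2 L_k(\lambda)$ with $L_k(\lambda) = -C_k\lambda - (dC_k + 2k + 1)$ linear, and the recursion $C_{k+1} = d^2 C_k + 2k(d-1) + (d-2)$ (with $C_1 = d-2$) shows by induction that $C_k \geq 0$ whenever $d \geq 2$. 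Hence $L_k < 0$ on $[-d, d]$, making $(\lambda - d)(\lambda+1)^2 L_k(\lambda) \geq 0$ there; combined with the inductive hypothesis multiplied by $\lambda^2 \geq 0$, this gives $R_{k+1}(\lambda) \geq \lambda^2 R_k(\lambda) \geq \lambda^{2k+3}$. Verifying the factorization and the sign of $L_k$ via the recursion for $C_k$ is where most of the bookkeeping lives and constitutes the main obstacle.

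For uniqueness: equality in the spectral bound forces $\operatorname{supp}(\mu_v) \subseteq \{d, -1\}$, and the two moment constraints then pin down $\mu_v = \tfrac{1}{d+1}\delta_d + \tfrac{d}{d+1}\delta_{-1}$. Reading off $(A^3)_{vv} = d(d-1)$ from this measure and recalling that this is the maximum number of closed $3$-walks through $v$ (achieved exactly when the neighborhood of $v$ induces $K_d$), we conclude that $v$ lies in a component isomorphic to $K_{d+1}$.
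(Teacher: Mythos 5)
Your argument is correct, and its overall architecture matches the paper's: both pass to the local spectral measure $\mu_v$ at the vertex (the paper phrases it as a random variable $X$ with $\P(X=\lambda_i)=\phi_i(v)^2$, $\E X=0$, $\E X^2=d$) and both use a quadratic majorant of $\lambda^{2k+1}$ that interpolates the spectrum of $K_{d+1}$ with a tangency at $-1$ and a crossing at $d$ — your $R_k$ is exactly the paper's parabola $L$ after the rescaling $y=\lambda/d$. Where you genuinely diverge is in how the key inequality $R_k(\lambda)\geq\lambda^{2k+1}$ on $[-d,d]$ is certified for \emph{all} $d\geq 2$: the paper's general optimization lemma only works for large $d$, so for pure powers it falls back on a one-line appeal to Descartes' rule of signs (Claim 4.3), whereas you give an induction on $k$ via the exact factorization $R_{k+1}-\lambda^2R_k=(\lambda-d)(\lambda+1)^2L_k(\lambda)$ together with the recursion $C_{k+1}=d^2C_k+2k(d-1)+(d-2)$, $C_1=d-2$. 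I checked the factorization (it follows from $B_k=2C_k+2k+1$) and the recursion (consistent with the closed form $C_k=(d^{2k+1}-(2k+1)d-2k)/(d+1)^2$); both are right, and your route is more explicit and self-contained, at the cost of some bookkeeping. Two small points worth making explicit: for the uniqueness step you need $R_k-\lambda^{2k+1}$ to vanish \emph{only} at $\{d,-1\}$, which does follow from your induction since $L_k<0$ strictly on $[-d,d]$ and a sum of two nonnegative terms vanishes only where both do — say this; and your closing argument that concentration of $\mu_v$ on $\{d,-1\}$ forces the component to be $K_{d+1}$ (reading off $(A^3)_{vv}=d(d-1)$) is a nice concrete touch that the paper leaves implicit.
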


\begin{cor}
    For \(k \in \N\) and \(d \geq 2\): 
    \begin{enumerate}
        \item The \(d\)-regular graphs with the highest density of closed walks of length \(2k\) consist of copies of \(K_{d,d}\). 
        \item The \(d\)-regular graphs the highest density of closed walks of length \(2k+1\) consist of copies of \(K_{d+1}\). 
    \end{enumerate}        
\end{cor}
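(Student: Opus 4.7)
The plan is to realize both parts via the classical identity $\#\{\text{closed walks of length } \ell \text{ in } G\} = \operatorname{tr}(A^\ell) = \sum_v (A^\ell)_{vv}$, where $A$ is the adjacency matrix of $G$, so that the density of closed $\ell$-walks equals $\operatorname{tr}(A^\ell)/|V(G)|$.

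For part (2) I would apply \Cref{prop:vertexwise} termwise. Since $\operatorname{tr}(A^{2k+1}) = \sum_v (A^{2k+1})_{vv}$ and each term is bounded by the corresponding count in $K_{d+1}$, with equality iff $v$ lies in a $K_{d+1}$ component, summing yields the density bound and forces every component of $G$ to be isomorphic to $K_{d+1}$.

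For part (1) I plan to use spectral arguments. Let $\lambda_1, \ldots, \lambda_n$ denote the eigenvalues of $A$, so $\operatorname{tr}(A^{2k}) = \sum_i \lambda_i^{2k}$. The $d$-regularity gives $|\lambda_i| \leq d$ and $\sum_i \lambda_i^2 = \operatorname{tr}(A^2) = nd$. Writing $\mu_i := \lambda_i^2 \in [0, d^2]$ with $\sum_i \mu_i = nd$, I want to maximize $\sum_i \mu_i^k$. Since $x \mapsto x^k$ is convex on $[0, d^2]$, the maximum over the polytope $\{\mu \in [0, d^2]^n : \sum_i \mu_i = nd\}$ is attained at a vertex where each $\mu_i \in \{0, d^2\}$; the linear constraint then forces exactly $n/d$ of them to equal $d^2$, giving $\operatorname{tr}(A^{2k}) \leq (n/d) \cdot d^{2k} = n d^{2k-1}$. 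This density bound $d^{2k-1}$ is indeed matched by disjoint copies of $K_{d,d}$.

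The hard part will be the equality characterization in part (1). Equality forces the spectrum of $A$ to lie in $\{-d, 0, d\}$ with exactly $n/d$ eigenvalues of absolute value $d$. Since the multiplicity of $+d$ in a $d$-regular graph equals the number of connected components $c$, and that of $-d$ equals the number of bipartite components, equality requires every component to be bipartite with $c = n/(2d)$, so each component has $2d$ vertices. For a connected $d$-regular bipartite graph on $2d$ vertices whose spectrum lies in $\{-d, 0, d\}$, writing the adjacency matrix in block form $\begin{pmatrix} 0 & B \\ B^T & 0 \end{pmatrix}$ shows that $B B^T$ must have rank at most $1$ with non-zero eigenvalue $d^2$; comparing the two expressions $\operatorname{tr}(B B^T) = d^2$ and $\operatorname{tr}(B B^T) = |V_A| \cdot d$ forces each side of the bipartition to have size $d$, after which the $d$-regular $\{0,1\}$ structure forces $B$ to be the all-ones matrix, identifying the component as $K_{d,d}$.
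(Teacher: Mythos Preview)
Your proof is correct. For part~(2) you sum the vertexwise bound of \Cref{prop:vertexwise} over $v$, which is exactly how the paper intends the corollary to follow. For part~(1) you take a somewhat different route: the paper handles the even case vertexwise as well (remarking just before the proof of \Cref{prop:vertexwise} that ``it is easy to show that $K_{d,d}$ maximizes the number of closed walks of even length vertex-wise''), relying on \Cref{cl:all_d} with $p(\lambda)=\lambda^{2k}$. You instead argue globally, using convexity of $x\mapsto x^k$ on $[0,d^2]$ applied to $\mu_i=\lambda_i^2$, and then give a direct spectral identification of the equality case rather than citing spectral determination from \cite{van2003graphs}. Both approaches work; yours is self-contained and avoids the Descartes-rule step, while the paper's vertexwise route yields the slightly stronger conclusion that even the local count $(A^{2k})_{vv}$ at any single vertex is maximized by $K_{d,d}$.

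One step in your equality analysis should be made explicit. From ``spectrum in $\{-d,0,d\}$ with exactly $n/d$ eigenvalues of absolute value $d$'' you jump directly to ``every component is bipartite with $c=n/(2d)$''. The missing link is $\operatorname{tr}(A)=\sum_i\lambda_i=0$: once $\lambda_i\in\{-d,0,d\}$, this forces the multiplicities of $+d$ and $-d$ to coincide, and since these equal the number of components and the number of bipartite components respectively, all components must be bipartite. After that your block-matrix rank argument goes through (and in fact the trace comparison you give is then redundant, since $|V_A|=|V_B|$ in a $d$-regular bipartite graph and the component already has $2d$ vertices).
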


Finally, we investigate the case of \(H= C_5\) in a little more detail. Here, the bound obtained from \cref{thm:odd_cycles_main} is \(d_0 = 7\). The problem is then to find the optimal graphs for \(d=3,4,5,6\). We provide a solution for \(d=3\).
\begin{prop}\label{prop:Petersen}
    The unique connected 3-regular graph with the highest \(C_5\)-density is the Petersen graph.
\end{prop}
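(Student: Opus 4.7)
The plan is to reduce $\inj(C_5,G)$ to a purely spectral quantity and bound it using a polynomial inequality that is sharp exactly at the Petersen eigenvalues. Using the standard decomposition $\hom(C_5,G) = \inj(C_5,G) + 5\inj(H_1,G) + 5\inj(H_2,G)$, where $H_1$ is the paw graph (from identifying one non-edge of $C_5$) and $H_2 = K_3$ (from identifying two disjoint non-edges), I first observe that in any $3$-regular graph each triangle extends to exactly three paw subgraphs, since each triangle vertex has a unique off-triangle neighbor forced by the degree condition. Combined with $|\Aut(\text{paw})|=2$, $|\Aut(K_3)|=6$, and $\Tr(A^3) = 6 n_t$, this yields the identity
\[ \inj(C_5,G) = \Tr(A^5) - 10\,\Tr(A^3) = \sum_i \p{\lambda_i^5 - 10 \lambda_i^3}. \]

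Next I would apply the pointwise inequality
\[ \lambda^5 - 10\lambda^3 \le \lambda^4 - \lambda^3 - 5\lambda^2 - 16\lambda + 12, \qquad \lambda \in [-3,3], \]
which holds because the difference factors as $(\lambda+2)^2(\lambda-1)^2(\lambda-3)\le 0$, with equality exactly at $\lambda \in \{-2,1,3\}$. Summing over all eigenvalues of $A$ and substituting $\Tr(A) = 0$, $\Tr(A^2) = 3n$, together with the counting identity $\Tr(A^4) = 15n + 8 n_4$ (from decomposing closed $4$-walks into degree, length-$2$ path, and $4$-cycle contributions) gives
\[ \inj(C_5,G) \le 12 n + 8 n_4 - 6 n_t. \]

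When $G$ has girth at least $5$, both $n_t$ and $n_4$ vanish, so $\tinj(C_5,G) \le 12$, and equality in the polynomial step forces every eigenvalue into $\{-2,1,3\}$. Combined with the trace identities and the simplicity of $\lambda_1 = 3$ for connected $G$, this pins the spectrum uniquely down to $\{3, 1^5, (-2)^4\}$, forces $n = 10$, and identifies $G$ as the Petersen graph --- the unique connected $3$-regular graph with this spectrum, i.e.\ the Moore graph of degree $3$ and girth $5$.

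The main obstacle is the girth $\le 4$ case: when $8 n_4 > 6 n_t$ --- as already happens for the triangular prism, with $n_t = 2$ and $n_4 = 3$ --- the bound above exceeds $12n$, so the polynomial inequality alone does not give $\tinj(C_5,G) \le 12$. To close the gap I would either combine it with a second polynomial inequality that extracts from each eigenvalue outside $\{-2,1,3\}$ a spectral deficit large enough to absorb the combinatorial excess $8 n_4 - 6 n_t$, or perform a structural case analysis directly bounding $5$-cycles through each triangle and each $4$-cycle. Either route requires showing that this excess is always strictly compensated, with equality only at the Petersen graph.
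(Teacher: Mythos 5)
Your spectral set-up is correct and matches the identity the paper derives in its appendix: the quotients of $C_5$ are $C_5$, $K_3+e$, and $K_3$ with multiplicities $1,5,5$, and for $3$-regular $G$ this gives $\inj(C_5,G)=\Tr(A^5)-10\Tr(A^3)$. Your quartic majorant is also verified correctly: $\lambda^5-10\lambda^3-(\lambda^4-\lambda^3-5\lambda^2-16\lambda+12)=(\lambda+2)^2(\lambda-1)^2(\lambda-3)\le 0$ on $[-3,3]$, with equality exactly on the Petersen spectrum, and the trace substitutions (including $\Tr(A^4)=15n+8n_4$) are right. For girth at least $5$ this does pin down the Petersen graph. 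But there is a genuine gap, which you yourself flag: the resulting bound is $\inj(C_5,G)\le 12n+8n_4-6n_t$, and the excess $8n_4-6n_t$ is positive for many cubic graphs (your prism example, but also e.g.\ $K_{3,3}$ with $n_t=0$, $n_4=9$). These graphs with short cycles are precisely the ones that must be ruled out, and neither of your two proposed remedies is carried out. The difficulty is structural: a degree-$2$ dual certificate in $\lambda$ (which would only need $\Tr(A^0),\Tr(A),\Tr(A^2)$, all determined by $n$ and $d$) cannot be tangent to $\lambda^5-10\lambda^3$ at all three Petersen eigenvalues, so any polynomial certificate sharp at $\{-2,1,3\}$ necessarily drags in $\Tr(A^3)$ and $\Tr(A^4)$, i.e.\ the combinatorial quantities $n_t$ and $n_4$ that you cannot control by the moment constraints alone. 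So as written the proof is incomplete.

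For comparison, the paper's own proof of this proposition is entirely local and combinatorial: it shows that an edge $e=(u,v)$ of a cubic graph lies on at most five $5$-cycles by a case analysis on how the neighbors $x_1,x_2$ of $v$ and $z_1,z_2$ of $u$ can coincide and share common neighbors, and that achieving five forces the rigid configuration of Figure~1, which contains a cut edge carrying no $5$-cycles; averaging over edges then gives density strictly below the Petersen value of four $5$-cycles per edge. If you want to salvage your spectral route, you would need exactly the kind of supplementary combinatorial input the paper uses locally --- for instance, a direct bound showing that triangles and $4$-cycles cost more $5$-cycles than the slack $8n_4-6n_t$ provides --- at which point much of the case analysis reappears.
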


While for \(d=4,5,6\) we are not currently able to determine the optimal graphs, we do give examples that have more 5-cycles than \(K_{d+1}\) for these values of \(d\).

The rest of this paper is structured as follows: the rest of \cref{sec:intro} gives a proof overview and discusses some open directions. The main ingredients and the overall structure of the proof are explained in \cref{sec:spectral}.  The actual proof is broken into a part about homomorphism inequalities in \cref{sec:hom_ineq} and some rather technical results about polynomial optimization in \cref{sec:opt}. Finally, \cref{sec:appendix} focuses on the special case of \(H=C_5\).

\subsection{Methods}
The most straightforward way of showing that copies of \(K_{d,d}\) (respectively, \(K_{d+1}\)) are optimal would be to prove that the number of subgraphs containing a given vertex in a connected graph are maximized exactly when that graph is \(K_{d,d}\) (respectively, \(K_{d+1}\)). This works when \(H\) is a triangle, or more generally a complete graph and yields \(K_{d+1}\) as the optimum. 
The same `pointwise' maximization method works for \(H = C_4\) and yields copies of \(K_{d,d}\). However, it already fails for \(H=C_5\) and \(d=3\). As \cref{fig:figure 1} shows, it is possible to construct a 3-regular graph where a single edge is contained in five 5-cycles.  Nonetheless, the method can still be pushed through in this case, as \cref{fig:figure 1} shows, up to symmetries, the only way to have such an edge. Thus, if 
there is a 3-regular graph with an edge \(e\) containing the maximal, five, 5-cycles going through it then the neighborhood of this edge is isomorphic to that shown in \cref{fig:figure 1}. Thus, there is a cut edge with no 5-cycles going through it, and a simple computation shows that on average we get less than four 5-cycles per edge in this block. On the other hand the Petersen graph achieves four 5-cycles per edge, so it has to be optimal. We give a more detailed proof in \cref{sec:appendix}.

\begin{figure}[!ht]
\centering
\resizebox{.4\textwidth}{!}{%
\begin{tikzpicture}
  % Define nodes with black filled circles
  \node[circle, fill=black, inner sep=3pt, label=below:$u$] (0) at (8.75,8) {};
  \node[circle, fill=black, inner sep=3pt, label=below:$v$] (1) at (13.25,8) {};
  \node[circle, fill=black, inner sep=3pt, label=left:$z_1$] (2) at (8,10) {};
  \node[circle, fill=black, inner sep=3pt, label=below:$z_2$] (3) at (5.5,10.5) {};
  \node[circle, fill=black, inner sep=3pt,
  label=below:$y_1$] (4) at (11,11.25) {};
  \node[circle, fill=black, inner sep=3pt,
  label=below:$y_2$] (5) at (11,13) {};
  \node[circle, fill=black, inner sep=3pt] (6) at (11,14.75) {};
  \node[circle, fill=black, inner sep=3pt, label=right:$x_1$] (7) at (14,10) {};
  \node[circle, fill=black, inner sep=3pt, label=below:$x_2$] (8) at (16.75,10.5) {};
  \node[circle, fill=black, inner sep=3pt] (9) at (12.75,16) {};
  
  % Draw edges
  \draw (0) -- (1) node[pos=0.5,below] {\Large{\(e\)}};
  \draw (0) -- (2);
  \draw (3) -- (0);
  \draw (0) -- (1);
  \draw (1) -- (7);
  \draw (1) -- (8);
  \draw (2) -- (4);
  \draw (4) -- (7);
  \draw (3) -- (4);
  \draw (7) -- (5);
  \draw (8) -- (5);
  \draw (5) -- (2);
  \draw (3) -- (6);
  \draw (8) -- (6);
  \draw (6) -- (9);
\end{tikzpicture}
}%
\caption{A 3-regular graph with 5 edges through \(e\) and a cut edge.}
\label{fig:figure 1}
\end{figure}

This method becomes impractical (or even useless), even for \(H = C_5\) and \(d=4\). Instead, we reframe the problem as a continuous optimization problem on the eigenvalues of \(G\) via homomorphism numbers and injective homomorphism numbers. For example, as we will see in \cref{5-cycles} 
\[ \inj(C_5,G)= \sum_{i=1}^n \lambda_i^5 + (5-5d)\lambda_i^3,\] where \(\{\lambda_i\}\) denote the adjacency eigenvalues of \(G\). 
One can then try, for a fixed \(n\), to maximize the right hand side over all possible sets of \(\{\lambda_i\}\) that satisfy some basic identities due to them being the eigenvalues of a \(d\)-regular graph.  There is no apriori reason to expect that the optimum would be achieved at the spectrum of an actual graph, but it actually is, at least for \(d=3\) and \(d \geq 7\). 

However, for general \(H\), including \(H= C_k\) with \(k\geq 6\), the main challenge is that subgraph counts are not expressible using the spectrum of the graph. It is possible to express subgraph counts as linear combinations of homomorphism counts. Some of these are, indeed, expressible using the spectrum. Most notably homomorphisms of cycles. Unfortunately, for a general \(H\), most homomorphism counts that show up in these formulas are ``non-spectral''.  Our solution is to find suitable inequalities between homomorphism counts where the bounding expressions are spectral and that are sharp for copies of \(K_{d,d}\) (respectively \(K_{d+1}\)). This is challenging to do efficiently. In fact, in \cite{hatami2011undecidability}, it was shown that any linear inequality between homomorphism densities, which are defined using homomorphism numbers, can be shown using a (possibly infinite) number of Cauchy-Schwarz inequalities. However, deciding whether such an inequality is true is indeterminable. This highlights the difficulty of finding and proving useful homomorphism inequalities. We prove various infinite families of such inequalities that are sharp at the optimal graphs and can be expressed using the spectrum of \(G\). For a formal introduction to homomorphism numbers and their theory, see \cite{lovasz2012large}.

Finally, the spectral bounds we obtain for the subgraph counts this way can be optimized over all possible sets of spectra. It turns out that, for \(d\) large enough, the optimizer will again be the spectrum of copies of \(K_{d,d}\) (respectively \(K_{d+1}\)). This allows us to solve the problem for any \(H\), given that \(d\) is large enough.  Additionally, unlike the ``pointwise'' method, this spectral argument also show uniqueness of the optimum:  \cite{van2003graphs} shows that copies of \(K_{d+1}\), copies of \(K_{d,d}\), and copies of Petersen graphs are all determined by their spectrum. Since spectra of these graphs are the unique maximizer for these optimization problems, we have that these graphs are the unique optimizers.

%%%%%%%%%%%%%%%%%%%%%%
\subsection{Further Directions}
%%%%%%%%%%%%%%%%%%%%%%

Our work contributes not just specific results, but also a toolbox for attacking otherwise intractable extremal questions through a mix of spectral and combinatorial techniques that allow us to use analytical tools from continuous optimization. The general method is the following:
\begin{enumerate}
    \item Phrase the problem as one about a certain family of homomorphism numbers. 
    \item Find homomorphism number inequalities that are sharp at the claimed extremum such that every term is now spectral.
    \item Solve the constrained continuous optimization problem.
\end{enumerate}
For example, in an upcoming work, we find the \(d\)-regular graph with the most closed, non-backtracking walks of length \(k\) by using the fact that number of closed non-backtracking walks of length \(k\) in \(G\) can be determined from the number of locally injective homomorphisms from \(C_k\) to \(G\). This method can likely be used to approach types of Turan problems, as having no \(H\) subgraphs can be detected using the spectrum of a graph.

\subsubsection{Additional Constraints}
It would be a natural extension to find the extremum in certain natural subsets of all graphs, such as connected graphs, graphs with fixed girth, or graphs without certain forbidden subgraphs. We conjecture that among connected graphs the optimal one will almost have the same clique structure but with a small number of edges joining the cliques. In terms of the eigenvalue optimization, this constraint is equivalent to forcing exactly one eigenvalue equal to \(d\). Thus, we believe that for sufficiently large \(d\) and \(n\), the optimal connected graph is the graph whose spectrum is closest to the clique structure's spectrum. For odd \(k\), it has been shown that the \(d\)-regular graph of girth \(k\) with the most \(k\)-cycles is copies of the balanced blowup of \(C_k\) \cite{beke2024generalized}. We conjecture that the \(d\)-regular graph of girth \(g>3\) with the most \(k\)-cycles is copies of the balanced blowup of \(C_g\).

\subsubsection{Near Optimal Behavior}
In \cite{van2022regular}, it was shown that for \(H=K_3\), near optimal regular graphs are asymptotically almost surely disjoint cliques and one more connected component with $O(1)$ triangles. We believe the same holds for any non-bipartite~\(H\) when \(d\) is sufficiently large. Similarly, we conjecture the corresponding statement for bipartite \(H\) and \(K_{d,d}\). Even for $H=C_4$, this has shown to be challenging.

%%%%%%%%%%%%%%%%%%%%%%
\section{Spectral optimization}\label{sec:spectral}
%%%%%%%%%%%%%%%%%%%%%%

In this section we introduce the main tools and use them to derive \cref{thm:bip_main} and \cref{thm:odd_cycles_main}.

Given a graph \(G\), we denote its vertex set by \(V(G)\), its edge set by \(E(H)\), and its adjacency matrix by \(A_G\). The spectrum of \(A_G\) as a multi-set will be denoted by \(\sigma(G)\), and when necessary we will refer to the eigenvalues as \(\lambda_1, \dots, \lambda_n\) and the corresponding eigenvectors as \(\phi_i, \dots, \phi_n\), where \(n = |V(G)|\).

%%%%%%%%%%%%%%%%%%%%%%
\subsection{Homomorphisms}
%%%%%%%%%%%%%%%%%%%%%%

First we fix notation and recall the relevant definitions and results from~\cite{lovasz2012large} that will be used in the proofs. 

\begin{dfn} \mbox{ }
\begin{enumerate}
\item
Given two graphs \(H,G\), a homomorphism from $H$ to $G$ is a function \(f: V(H) \to V(G)\) that maps pairs of adjacent nodes to pairs of adjacent nodes. 
\item We denote \(\Hom(H,G)\) the set of homomorphisms from \(H\) to \(G\). Let $\hom(H,G) = |\Hom(H,G)|$, be the number of homomorphisms, and let \(\inj(H,G)\) be the number of injective homomorphisms. Furthermore we let 
\[ \thom(H,G) := \frac{\hom(H,G)}{|V(G)|} \ \mbox{ and }\ \tinj(H,G) := \frac{\inj(H,G)}{|V(G)|}\] and refer to these as (injective) homomorphism densities. 
\item Given a partition \(P\) of \(V(H)\), we denote \(H/P\) the graph obtained by identifying the nodes of \(H\) in each class of \(P\). In particular, if \(u,v\) were in the same class and \(uv\) was an edge, then \(H/P\) will have a loop edge. 
\end{enumerate}
\end{dfn}

In the language of homomorphism numbers, our task is to find  \(\sup_{G} \tinj(H, G)\) over all \(d\)-regular \(G\), and identify which graphs achieve the supremum.

In \cite{lovasz2012large}, Lovasz presents the following relations between these two numbers.

\begin{prop}[\cite{lovasz2012large}]\label{prop:hom to inj}

\begin{align*}
    \hom(H,G) = \sum_{P} \inj(H/P, G),
\end{align*}
where \(P\) ranges over all partitions of \(V(H)\).
\end{prop}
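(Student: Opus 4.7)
The plan is to set up a bijection between homomorphisms $H \to G$ grouped by their fiber structure, and injective homomorphisms out of quotients of $H$. Every $f \in \Hom(H,G)$ has fibers: define an equivalence relation $\sim_f$ on $V(H)$ by $u \sim_f v$ iff $f(u) = f(v)$, and let $P_f$ be the corresponding partition. Then $f$ factors uniquely through the quotient map $q_{P_f} \colon H \to H/P_f$ as $f = \bar{f} \circ q_{P_f}$, and by construction of $P_f$ the induced map $\bar{f} \colon V(H/P_f) \to V(G)$ is injective. To check $\bar{f}$ is actually a graph homomorphism into $G$, one observes that each edge of $H/P_f$ is the image of some edge $uv$ of $H$, and $f(u)f(v) = \bar{f}(q_{P_f}(u))\bar{f}(q_{P_f}(v))$ is an edge of $G$ because $f$ is a homomorphism.

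Next I would run this in reverse. For a fixed partition $P$ of $V(H)$ and any injective homomorphism $g \colon H/P \to G$, the composition $f := g \circ q_P$ is a homomorphism $H \to G$, and one checks directly that $P_f = P$ (injectivity of $g$ is what prevents additional identifications). This gives, for each $P$, a bijection between $\{f \in \Hom(H,G) : P_f = P\}$ and the set of injective homomorphisms $H/P \to G$. Since every $f \in \Hom(H,G)$ has some fiber partition, summing the cardinalities over all partitions $P$ of $V(H)$ yields the claimed identity.

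The one subtle point, which I would flag explicitly, is what happens when $P$ is not ``edge-respecting'' in the sense that some class of $P$ contains the endpoints of an edge of $H$. In that case $H/P$ acquires a loop, and since $G$ is simple there are no homomorphisms (let alone injective ones) from $H/P$ to $G$, so such terms contribute $0$ to the right-hand side. This matches the left-hand side correctly, because a homomorphism $f \colon H \to G$ cannot have fiber partition $P$ of this type: it would require $f$ to send the two endpoints of an edge of $H$ to the same vertex of $G$, contradicting the fact that $G$ has no loops. So the bijection holds trivially (both sides equal zero) on these partitions and is the actual bijection described above on the remaining ones.

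I do not expect a real obstacle here; the main thing to get right is bookkeeping, specifically that the factorization $f = \bar{f} \circ q_{P_f}$ genuinely lands in injective homomorphisms and that the inverse construction recovers exactly the partition $P$. The loop case is the only spot where one has to pause, and it is resolved cleanly because both sides vanish.
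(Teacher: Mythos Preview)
Your argument is correct and is exactly the standard fiber-partition bijection proof of this identity. The paper itself does not give a proof of this proposition at all; it simply quotes the statement from \cite{lovasz2012large}, so there is nothing to compare against beyond noting that your proof is the canonical one.
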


\begin{prop}[\cite{lovasz2012large}]\label{prop:inj to hom}

\begin{align*}
\inj(H,G) = \sum_P \mu_P \hom(H/P,G),
\end{align*}
where \(P\) ranges over all partitions of \(V(H)\) and
\begin{align*}
\mu_P = (-1)^{v(G)-|P|} \prod_{S \in P} (|S|-1)!,
\end{align*}
where \(|P|\) is the number of classes in \(P\) and \(S\) are the classes in \(P\).
\end{prop}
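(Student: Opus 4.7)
The plan is to derive this formula from \cref{prop:hom to inj} by Möbius inversion on the partition lattice $\Pi_n$ of $V(H)$, where $n = v(H)$. (Note that the exponent in $\mu_P$ should presumably read $v(H) - |P|$ rather than $v(G) - |P|$, which I'll adopt.) The key conceptual point is that \cref{prop:hom to inj} is not just a single identity, but a special case of a family of identities indexed by partitions, which is precisely the setup needed for Möbius inversion.

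First I would record the slight generalization of \cref{prop:hom to inj} obtained by applying it to the quotient graph $H/P$. Concretely, for any partition $P$ of $V(H)$, since a partition of $V(H/P)$ corresponds bijectively to a partition $Q$ of $V(H)$ refined by $P$ (written $Q \geq P$ in the standard refinement order where the discrete partition $\hat{0}$ is the minimum), \cref{prop:hom to inj} applied to $H/P$ becomes
\[
\hom(H/P, G) \;=\; \sum_{Q \geq P} \inj(H/Q, G).
\]
In the case $P = \hat{0}$ this recovers the original statement, and taking $P$ to be coarser gives analogous identities. The cleanest way to see this is to partition $\Hom(H,G)$ according to the fiber partition $P_f$ of each homomorphism $f$: mapping $f$ to its induced injective homomorphism $H/P_f \to G$ yields a bijection between $\{f : P_f = Q\}$ and $\Inj(H/Q, G)$.

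Next I would invoke Möbius inversion on $\Pi_n$: the system of identities above can be inverted to give
\[
\inj(H/P, G) \;=\; \sum_{Q \geq P} \mu_{\Pi_n}(P, Q)\, \hom(H/Q, G),
\]
and specializing to $P = \hat{0}$ yields the desired formula, provided we identify $\mu_{\Pi_n}(\hat{0}, Q)$ with $\mu_Q$ as defined in the statement.

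The main obstacle is therefore computing $\mu_{\Pi_n}(\hat{0}, Q)$. This is classical but requires some work. Since the interval $[\hat{0}, Q]$ in $\Pi_n$ factors as a product $\prod_{S \in Q} \Pi_{|S|}$ of partition lattices (one for each block of $Q$), and Möbius functions are multiplicative over products of posets, it suffices to show that for the full partition lattice $\Pi_m$,
\[
\mu_{\Pi_m}(\hat{0}, \hat{1}) \;=\; (-1)^{m-1}(m-1)!.
\]
This can be done by induction on $m$, using the defining recursion $\sum_{P \leq \hat{1}} \mu(\hat{0}, P) = 0$ together with a count of how many partitions $P$ of $[m]$ have a prescribed block structure. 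Combining this across the blocks of $Q$ gives
\[
\mu_{\Pi_n}(\hat{0}, Q) \;=\; \prod_{S \in Q} (-1)^{|S|-1}(|S|-1)! \;=\; (-1)^{n - |P|} \prod_{S \in Q}(|S|-1)!,
\]
which matches $\mu_Q$ and completes the proof. Alternatively, one can bypass the lattice-theoretic machinery and verify the identity directly by an inclusion–exclusion over the events $\{f(u) = f(v)\}$ for pairs $u \neq v \in V(H)$; the combinatorics of that inclusion–exclusion produces exactly the same product of factorials.
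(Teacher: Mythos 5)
Your proof is correct and follows essentially the same route as the cited source (\cite{lovasz2012large}): the identity of \cref{prop:hom to inj}, applied to all quotients $H/P$, is inverted via the M\"obius function of the partition lattice, whose value $(-1)^{|S|-1}(|S|-1)!$ on each block gives exactly $\mu_P$. You are also right that the exponent in the statement should read $v(H)-|P|$ rather than $v(G)-|P|$.
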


%%%%%%%%%%%%%%%%%%%%%%
\subsection{Polynomial bounds}\label{sec:poly_bounds}
%%%%%%%%%%%%%%%%%%%%%%

Our goal is to express \(\tinj(H,G)\) in terms of the spectrum of \(G\). \cref{prop:inj to hom} allows us to deal with homomorphisms instead of injective ones. Certain homomorphism counts can be expressed using the spectrum. For example,
\begin{equation*}\hom(C_k, G) = \mathrm{tr}(A_G^k) = \sum_{\lambda \in \sigma(G)} \lambda^k
\end{equation*}
is standard. 

The main challenge is that many quotients \(H/P\) cannot be expressed using the spectrum of \(G\). We call these \emph{non-spectral terms}. The idea is to bound these terms using the spectrum of \(G\) in a way that is sharp on the optimal graph. We are going to need upper bounds on terms with positive coefficients, and lower bounds for terms with negative coefficients in \cref{prop:inj to hom}. The bounds we obtain this way are summarized in the following statements.

\begin{thm}\label{thm:bound}
Let \(H\) be a connected graph that is not a tree. Then there is a 2-variable polynomial \(p_H(\lambda,d)\) such that the following are satisfied.
\begin{itemize}
\item For any \(d \geq |V(H)|\) integer and any \(G \in \GG(d)\):
\begin{equation}\label{eq:inj_bound}
    \inj(H,G) \leq \sum_{\lambda \in \sigma(G)} p_H(\lambda,d)
\end{equation}
\item \(p\) has degree \(|V(H)|\), and for some \(k > 0\) the polynomial \(p(\lambda,d) - \lambda^k d^{|V(H)|-k}\) has degree less than \(|V(H)|\). In other words, \(p\) has a single monomial with the highest degree and that monomial has coefficient 1, and it is not \(d^{|V(H)|}\). 
\item Depending on whether $H$ is bipartite or not:
\begin{enumerate}
\item When $H$ is bipartite, all monomials of \(p\) have an even \(\lambda\)-exponent, and if \(G\) consists of disjoint copies of \(K_{d,d}\) then \cref{eq:inj_bound} is satisfied with equality. 
\item When $H$ is not bipartite, the $k$ above is odd and  when \(G\) consists of disjoint copies of \(K_{d+1}\) then  \cref{eq:inj_bound} is satisfied with equality
\end{enumerate}
\end{itemize}
\end{thm}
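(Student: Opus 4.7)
The plan is to expand using the inversion formula of \cref{prop:inj to hom},
$$\inj(H,G) = \sum_{P} \mu_P \hom(H/P, G),$$
and to replace each summand on the right by a spectral expression so that the total collapses to $\sum_{\lambda \in \sigma(G)} p_H(\lambda, d)$. The main difficulty is that most quotients $H/P$ yield non-spectral homomorphism counts.

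First I would prune the sum. Any partition class containing two adjacent vertices of $H$ makes $H/P$ have a loop, so $\hom(H/P,G) = 0$ for simple $G$. Restricting to independent-set partitions and collapsing multi-edges, each surviving quotient $H' = H/P$ is a connected simple graph that falls into one of three categories: \textbf{(a)} $H'$ is a tree, where $\hom(H',G) = |V(G)|\, d^{|V(H')|-1} = \sum_\lambda d^{|V(H')|-1}$ is trivially spectral; \textbf{(b)} $H'$ is a single cycle $C_k$, where $\hom(C_k,G) = \mathrm{tr}(A_G^k) = \sum_\lambda \lambda^k$ is directly spectral; \textbf{(c)} $H'$ is more complex, where $\hom(H',G)$ is not spectral. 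In case (c) one bounds $\hom(H',G)$ by a polynomial $q_{H'}(\lambda,d)$ summed over $\sigma(G)$, choosing the direction to match the sign of the aggregate coefficient $\tilde\mu_{H'} = \sum_{P:\, H/P \cong H'}\mu_P$, and demanding tightness on disjoint copies of $K_{d,d}$ (bipartite case) or $K_{d+1}$ (non-bipartite case). These are precisely the inequalities developed in \cref{sec:hom_ineq}, which I would invoke here as a black box.

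Assembling $p_H(\lambda,d)$ by summing contributions and collecting like monomials completes the construction. The leading-monomial claim would be verified by reading off leading coefficients in $d$ from $q(d) := \sum_\lambda p_H(\lambda, d)$ evaluated on $K_{d,d}$, which must equal $\inj(H,K_{d,d})$ as a polynomial in $d$. Since non-leading monomials contribute only $O(d^{|V(H)|-1})$ to $q(d)$ on $K_{d,d}$, the top term $\lambda^k d^{|V(H)|-k}$ alone determines the $d^{|V(H)|}$-coefficient, which evaluates on $\sigma(K_{d,d})$ to $2 \cdot [k \text{ even}]$. For non-bipartite $H$, $\inj(H,K_{d,d}) = 0$, forcing $k$ odd; for bipartite $H$, $\inj(H,K_{d,d}) \sim 2d^{|V(H)|}$, forcing $k$ even. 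That $k > 0$ follows because $\inj(H,G) \leq |V(G)| d^{|V(H)|-1}$, while a top monomial $d^{|V(H)|}$ summed on $K_{d+1}$ would give $\Theta(d^{|V(H)|+1})$, a contradiction. The stronger bipartite parity claim --- that every monomial of $p_H$ has an even $\lambda$-exponent --- comes from constructing the step (c) bounds in \cref{sec:hom_ineq} to involve only even spectral moments, a choice consistent with tightness on $K_{d,d}$ since odd moments vanish on its spectrum.

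The main obstacle is step (c): producing a single finite family of spectral upper and lower bounds on $\hom(H',G)$ that apply uniformly across all non-tree, non-cycle quotients of every non-tree $H$ and close up exactly on $K_{d,d}$ or $K_{d+1}$. The undecidability result in \cite{hatami2011undecidability} rules out any uniform algorithmic approach, so the real content --- deferred to \cref{sec:hom_ineq} --- is constructing explicit Cauchy--Schwarz-type bounds whose equality cases match the claimed extremizers exactly.
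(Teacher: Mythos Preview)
Your plan diverges from the paper's in a way that creates a real gap. You expand $\inj(H,G)$ via the M\"obius formula \cref{prop:inj to hom} and then ask for spectral bounds on each $\hom(H',G)$, upper or lower according to the sign of the aggregate $\tilde\mu_{H'}$. But the inequalities actually proved in \cref{sec:hom_ineq} (\cref{lem:hom_ineq}) only go one way: part~(3)/(4) gives a \emph{lower} bound on $\hom(H',G)$ that is sharp at $K_{d+1}$ or $K_{d,d}$, and part~(1) is an inequality between \emph{injective} counts, not $\hom$ counts. There is no spectral upper bound on $\hom(H',G)$ for a generic multi-cycle quotient $H'$ in the paper's toolkit, so your ``choose the direction to match the sign'' step cannot be carried out with what is available. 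Since quotients with positive aggregate coefficient certainly occur, this is not a cosmetic issue.

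The paper avoids this asymmetry by a different organization. It never uses \cref{prop:inj to hom}. Instead it first replaces $H$ by a spanning uni-cyclic subgraph $H'$ (containing an odd cycle in the non-bipartite case) via $\inj(H,G)\le\inj(H',G)$ from \cref{lem:hom_ineq}(1); this single step supplies the leading monomial $\lambda^k d^{|V(H)|-k}$ directly and with the correct parity of $k$, because $\hom(H',G)$ is already spectral. Then it expands $\inj(H',G)$ using \cref{prop:hom to inj} twice,
\[
\inj(H',G)=\hom(H',G)-\sum_P\hom(H'/P,G)+\sum_P\sum_Q\inj(H'/P/Q,G),
\]
so that every $\hom$ term appears with a \emph{negative} sign (handled by \cref{lem:hom_ineq}(4)) and every remaining term is an $\inj$ of a strictly smaller graph (handled by induction). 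The recursion, not a one-shot M\"obius expansion, is what makes the signs line up with the available one-sided inequalities. For the bipartite refinement the paper inserts an additional idea you do not mention: a bipartite double-cover lemma shows it suffices to prove \cref{eq:inj_bound} for bipartite $G$, after which non-bipartite quotients vanish and only even-length cycles (hence even $\lambda$-exponents) appear.
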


The usefulness of such bounding polynomials becomes clear from the following lemmas. They imply that for a general class of polynomials, for \(d\) large enough, the sum \(\sum_{\lambda \in \sigma(G)}p(\lambda)\) is maximized among \(d\)-regular graphs exactly for copies of either \(K_{d+1}\) or \(K_{d,d}\). Instead of maximizing over the spectra of \(d\)-regular graphs, we can maximize over the larger set of random variables with values in \([-d,d]\) that have 0 mean and \(d\) second moment. For an \(n\)-vertex graph \(G\), let \(X_G\) denote the random variable whose distribution is the spectral measure of \(A_G\): \[\mathbb{P}(X_G = \lambda) = \frac{1}{n}\cdot(\mbox{the multiplicity of the eigenvalue \(\lambda\) in \(\sigma(G)\)}) .\]

\begin{lem}\label{lem:even_opt}
    Let \(p(x,y)\) be a degree \(n\) polynomial in which all monomials have an even \(x\)-exponent and such that some \(k=2m > 0\) the polynomial \(p(x,d) - x^k y^{n-k}\) has degree less than \(n\). Then there is a \(d_p\) that satisfies the following: If \(d\geq d_p\) and \(X\) is a random variable supported on \([-d,d]\) that satisfies \(\E X = 0\) and \(\E X^2 = d\), then
    \begin{equation}
        \E  p(X,d) \leq \E p(X_{K_{d,d}},d),
    \end{equation}
     and equality holds if and only if \(X \sim X_{K_{d,d}}\).
\end{lem}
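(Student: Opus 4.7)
The plan is to exploit the evenness of $p(\cdot, d)$ in $x$ to reduce to a one-dimensional moment problem on $[0, d^2]$, and then to prove a polynomial chord inequality whose strict positivity is forced by the hypothesis on the leading monomial of $p$.

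Since every monomial of $p(x,y)$ has even $x$-exponent, I can write $p(x, d) = g(w)$ with $w = x^2$, where $g(w) = \sum_{i \ge 0} a_i(d)\, w^i$ and the top monomial $x^k y^{n-k}$ contributes $a_m(d) = d^{n-k} + O(d^{n-k-1})$ with $m = k/2$, while every $a_i(d)$ with $i \neq m$ satisfies $|a_i(d)| \leq C\, d^{n - 2i - 1}$ (because the monomials feeding $a_i$ all have total degree at most $n - 1$). Let $Y = X^2 \in [0, d^2]$: the hypotheses $\E X = 0$ and $\E X^2 = d$ become $\E Y = d$, and every admissible distribution of $Y$ lifts to an admissible $X$ by an independent uniformly random sign, so the problem reduces to maximizing $\E g(Y)$ over distributions of $Y$ on $[0, d^2]$ with $\E Y = d$. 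The spectral measure of $K_{d,d}$ yields $Y_{K_{d,d}}$ supported on $\{0, d^2\}$ with masses $(d-1)/d$ and $1/d$, giving $\E g(Y_{K_{d,d}}) = \ell_*(d)$, where $\ell_*$ is the secant line through $(0, g(0))$ and $(d^2, g(d^2))$. Since $\ell_*$ is linear, $\E \ell_*(Y) = \ell_*(d)$ for every admissible $Y$, so the lemma is equivalent to the pointwise chord bound $g(w) \leq \ell_*(w)$ on $[0, d^2]$ with equality only at the endpoints.

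The key step is the factorization $\ell_*(w) - g(w) = w(d^2 - w)\, Q(w, d)$, valid because both $w = 0$ and $w = d^2$ are roots of the polynomial $\ell_* - g$. A direct expansion using $(d^2)^{i-1} - w^{i-1} = (d^2 - w) \sum_{j=0}^{i-2} d^{2(i-2-j)} w^j$ shows that the contribution of the top monomial to $Q$ equals $d^{n-k} \cdot \bigl((d^2)^{m-1} - w^{m-1}\bigr)/(d^2 - w)$, which is positive on $[0, d^2]$ and of order $\Theta(d^{n - 4})$ uniformly; every other contribution is bounded by a constant multiple of $d^{n - 5}$ after applying the coefficient bound on $a_i$. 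Hence $Q(w, d) > 0$ on $[0, d^2]$ for $d$ exceeding some $d_p$ depending on the coefficients of $p$, which gives $g(w) < \ell_*(w)$ on $(0, d^2)$.

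Strict positivity then forces any maximizer of $\E g(Y)$ to be concentrated on $\{0, d^2\}$; combined with $\E Y = d$ this pins $Y \sim Y_{K_{d,d}}$, and the symmetric sign-lift recovers $X \sim X_{K_{d,d}}$ uniquely. The main obstacle I anticipate is the degenerate case $m = 1$ (i.e., $k = 2$): in that regime no $i = m$ term appears in the factorization of $Q$, the leading positive contribution vanishes, and $Q$ can change sign in general. The lemma then implicitly relies on structural features of the $p_H$ produced by \cref{thm:bound} guaranteeing that $m \geq 2$ for bipartite non-tree $H$, something I would verify separately by tracing the construction of $p_H$; otherwise a more delicate analysis using sub-leading monomials of $p$ is required.
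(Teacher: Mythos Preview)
Your argument is correct and follows essentially the same route as the paper: substitute $Y=X^2$ (the paper uses the rescaled $Y=(X/d)^2$ on $[0,1]$, you work on $[0,d^2]$), reduce to showing the secant line through the endpoints dominates the resulting one-variable polynomial, and observe that the unique top monomial forces this chord inequality for large $d$. Your explicit factorization $\ell_*(w)-g(w)=w(d^2-w)Q(w,d)$ with the $\Theta(d^{n-4})$ versus $O(d^{n-5})$ comparison is just a repackaging of the paper's inequality chain $1+y+\dots+y^{m-2}>O(1/d)$.

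Your caveat about $m=1$ is well taken and worth stating: the lemma as written is actually false in that case (e.g.\ $p(x,y)=x^2y^{n-2}$ gives $\E p(X,d)=d^{n-1}$ for every admissible $X$, so uniqueness fails), and the paper's own proof has the same gap, since the claimed lower bound ``the left hand side is at least $1$'' for $1+y+\dots+y^{m-2}$ collapses to an empty sum when $m=1$. In the intended application the bounding polynomial $p_H$ comes from a bipartite non-tree $H$, whose shortest cycle has length at least $4$, so $k\geq 4$ and $m\geq 2$; you are right that this is where the missing hypothesis is supplied.
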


\begin{lem}\label{lem:odd_opt}
    Let \(p(x,y)\) be a degree \(n\) polynomial such that some \(k = 2m+1 > 0\) the polynomial \(p(x,d) - x^k y^{n-k}\) has degree less than \(n\). Then there is a \(d_p\) that satisfies the following: If \(d\geq d_p\) and \(X\) is a random variable supported on \([-d,d]\) that satisfies \(\E X = 0\) and \(\E X^2 = d\), then
    \begin{equation}
        \E  p(X,d) \leq \E p(X_{K_{d+1}},d),
    \end{equation}
    and equality holds if and only if \(X \sim X_{K_{d+1}}\).
\end{lem}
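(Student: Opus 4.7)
The approach is a Lagrange/duality argument for the moment problem: exhibit a quadratic polynomial \(q(x) = a + bx + cx^2\), with coefficients depending on \(p\) and \(d\), such that \(q(x) \geq p(x,d)\) on \([-d,d]\) with equality exactly at \(x = -1\) and \(x = d\). Once such \(q\) exists, the conclusion is automatic: for any feasible \(X\),
\[
    \E p(X,d) \leq \E q(X) = a + b\cdot\E X + c\cdot\E X^2 = a + cd,
\]
an expression independent of \(X\). Applying this to \(X_{K_{d+1}}\), whose law \(\frac{d}{d+1}\delta_{-1} + \frac{1}{d+1}\delta_{d}\) is supported on the equality set \(\{-1,d\}\), shows \(\E p(X_{K_{d+1}},d) = a + cd\) as well. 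For equality in the bound to hold for some other feasible \(X\), the support of \(X\) must lie inside the zero set of \(q-p\); combined with the two moment constraints this pins down \(X = X_{K_{d+1}}\).

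I would construct \(q\) by Hermite interpolation, imposing
\[
    q(-1) = p(-1,d), \qquad q'(-1) = p_x(-1,d), \qquad q(d) = p(d,d).
\]
This is a \(3\times 3\) linear system in \((a,b,c)\) with determinant \((d+1)^2 \neq 0\), so \(q\) is uniquely determined as a rational expression in \(d\). The first and third conditions make \(q\) and \(p\) agree on the support of \(X_{K_{d+1}}\) so the bound is tight on that distribution; the middle one forces a double zero of \(q-p\) at the interior point \(-1\), which is what one needs for the one-sided inequality to hold near \(-1\). When \(\deg_x p \leq 2\) one may simply take \(q = p\) and there is nothing to prove, so assume \(m := \deg_x p \geq 3\).

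With \(q\) in hand, verifying \(q \geq p\) on \([-d,d]\) reduces to a polynomial sign check. Since \(q-p\) has a double zero at \(-1\) and a simple zero at \(d\), it factors as
\[
    q(x) - p(x,d) = -(x-d)(x+1)^2 h(x),
\]
where \(h\) is a polynomial of degree \(m-3\) whose coefficients are rational in \(d\). On \([-d,d]\) both \(-(x-d)\) and \((x+1)^2\) are non-negative, so \(q \geq p\) on the interval is equivalent to \(h \geq 0\) on \([-d,d]\), and the uniqueness statement becomes strict positivity of \(h\) on \([-d,d]\setminus\{-1,d\}\).

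The real work is in establishing this positivity uniformly in \(d\). For the pure leading monomial \(p_0(x,d) = x^k d^{n-k}\) one can compute \(h_0\) explicitly and check it by hand for small \(k\): the construction gives \(h_0 \equiv 1\) when \(k = 3\), and \(h_0(x) = x^2 + (d-2)x + (d^2-2d+3)\) when \(k = 5\), whose discriminant \(-3d^2 + 4d - 8\) is negative for \(d\geq 2\), so \(h_0 > 0\). For general odd \(k\) the natural move is to rescale \(y = x/d\) and show that an appropriately normalized \(h_0(dy)\) converges, as \(d \to \infty\), to an explicit limit polynomial on \([-1,1]\) whose positivity can be verified by induction on \(k\) or by a Chebyshev-type argument. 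The lower-order remainder \(r\) in \(p = x^k d^{n-k} + r\) perturbs the coefficients of \(h\) by terms of strictly smaller \(d\)-degree, so the sign of \(h\) is preserved for \(d \geq d_p\). Carrying out this asymptotic analysis uniformly in \(k\) and \(p\), and cleanly handling the case \(\deg_x p > k\) where the leading-order identification of \(h\) is more delicate, is where the main obstacle lies and presumably where \cref{sec:opt} does its heavy lifting.
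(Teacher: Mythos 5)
Your construction is exactly the paper's: a majorizing parabola determined by tangency to \(p(\cdot,d)\) at \(x=-1\) and a simple intersection at \(x=d\) (the two atoms of \(X_{K_{d+1}}\)), so that \(\E q(X)\) is pinned by the two moment constraints and the inequality plus the uniqueness statement follow formally. The problem is that you have explicitly deferred the only non-formal step --- the positivity of \(h\) on \([-d,d]\) for all admissible \(p\) and all large \(d\) --- verifying it only for the pure monomials \(x^3\) and \(x^5\) and conceding that the general case ``is where the main obstacle lies.'' Since that step \emph{is} the content of the lemma (everything else is the standard duality bookkeeping), the proposal as written has a genuine gap. Moreover, your unrescaled perturbation argument (``the remainder perturbs the coefficients of \(h\) by terms of strictly smaller \(d\)-degree'') is not sound as stated when \(\deg_x p > k\): the remainder can then contribute the top \(x\)-degree coefficient of \(h\), with either sign, and ``smaller \(d\)-degree'' does not control a polynomial uniformly on an interval of length \(2d\).

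The fix is the rescaling you yourself flag as the natural move, and it closes the gap completely (this is how the paper argues). Set \(y=x/d\) and \(q(y)=d^{-n}p(dy,d)=y^{2m+1}+\sum_{j\geq 1}\sum_{k} c_{j,k}y^k d^{-j}\); the hypothesis on \(p\) guarantees that \emph{every} term except \(y^{2m+1}\) carries a factor \(d^{-j}\) with \(j\geq 1\), irrespective of its \(y\)-degree, which is precisely what neutralizes the \(\deg_x p>k\) issue. The tangency point becomes \(y_0=-1/d\), and the required positivity becomes, after dividing out the double zero at \(y_0\) and the simple zero at \(1\),
\begin{equation*}
  \frac{\hat q(1)-\hat q(y)}{1-y} \;=\; 1+y+\cdots+y^{2m-2} \;+\; O(1/d)
  \quad\text{uniformly on } [-1,1],
\end{equation*}
where \(\hat q(y)=\bigl(q(y)-q(y_0)-(y-y_0)q'(y_0)\bigr)/(y-y_0)^2\). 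The leading part equals \((1-y^{2m-1})/(1-y)\), which is continuous and strictly positive on \([-1,1]\) (it equals \(1\) at \(y=-1\)), hence has a positive infimum independent of \(d\); the \(O(1/d)\) remainder is a fixed-length sum of monomials each carrying at least one factor \(1/d\), so it is eventually dominated. No induction on \(k\) or Chebyshev argument is needed. One further caveat: for \(k=1\) (i.e.\ \(m=0\)) the leading part is the empty sum and the argument, and indeed the uniqueness claim, fails; the lemma is only ever invoked with \(k\geq 3\), but your case split at \(\deg_x p\leq 2\) quietly discards exactly the case where the statement is problematic, so it deserves an explicit remark rather than ``there is nothing to prove.''
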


With these tools in hand, \cref{thm:bip_main} and \cref{thm:odd_cycles_main} follow easily.

\begin{proof}[Proof of \cref{thm:bip_main} and \cref{thm:odd_cycles_main}]
Let \(H\) be fixed. By \cref{thm:bound}  there is a degree \(|V(H)|\) polynomial \(p_H(\lambda,d)\) with certain properties guaranteed by the theorem. Then, there is an integer \(d_{p_H}\) guaranteed by \cref{lem:even_opt} or \cref{lem:odd_opt}. Let \(d \geq \max\{d_{p_H},|V(H)|\}\), and let \(K\) denote \(K_{d,d}\) or \(K_{d+1}\) depending on whether we are in the bipartite or the non-bipartite case. Then, for any \(d\)-regular \(n\)-vertex \(G\) we get
\[ \tinj(H,G) \stackrel{(a)}{\leq} \frac{1}{n}\sum_{\lambda \in \sigma(G)} p_H(\lambda,d) \stackrel{(b)}{=} \E  p_H(X_G,d) \stackrel{(c)}{\leq} \E  p_H(K,d) \stackrel{(d)}{=} \tinj(H,K).\]
Here (a) is the statement of \cref{thm:bound}, (b) follows from \(X_G\) being the spectral measure of \(G\), (c) is the estimate from \cref{lem:even_opt}/\cref{lem:odd_opt}, and finally (d) holds because the polynomial \(p_H\) was constructed so that (a) is satisfied with equality for \(K\).

Lastly, to have equality, we need (c) to be sharp which only happens when \(X_G \sim X_K\) so the spectral measure of \(G\) is the same as that of \(K\). However, according to~\cite{van2003graphs} this only holds when \(G\) consist of copies of \(K\).
\end{proof}

%%%%%%%%%%%%%%%%%%%%%%
\section{Homomorphism inequalities}\label{sec:hom_ineq}
%%%%%%%%%%%%%%%%%%%%%%

In this section our goal is to prove \cref{thm:bound}. First establish various relationships between homomorphism counts that will be used later in the proof.

\begin{lem}\label{lem:hom_ineq}
Let \(H\) be a connected graph with \(n\) nodes and \(m\) edges. Let us fix a spanning tree \(T \leq H\). Let us write $E_0 = E(H) \setminus E(T)$ For each edge \(e \in E_0\) let \(H_e = T \cup \{e\} \leq H\) denote the subgraph with a single cycle that passes through \(e\). 
Let \(G \in \GG_d\). Then
    \begin{enumerate}
    \item\label{part:1} \(\inj(H,G) \leq \inj(H_e,G)\) for any \(e\in E_0\). There is equality for \(G = K_{d+1}\), and if \(H\) is bipartite then there is also equality for \(G = K_{d,d}\).
    \item\label{part:2} \(\hom(T,G) = d^{n-1}\cdot |V(G)|\) and
    \[ \hom(H_e,G) = d^{n-k} \cdot \sum_{\lambda \in \sigma(G)}\lambda^k ,\] where \(k\) is the length of the unique cycle in \(H_e\).
    \item\label{part:3} For any \(G \in \GG_d\)
    \[ \hom(H,G) \geq \left(\sum_{e \in E_0} \hom(H_e,G)\right) - (m-n)\hom(T,G).\] Equality holds for \(G = K_{d+1}\) and when \(H\) is bipartite then equality also holds for \(G = K_{d,d}\). 
    \item\label{part:4} For any \(G \in \GG_d\)
    \[ -\hom(H,G) \leq \sum_{\lambda \in \sigma(G)} \left( (n-m)d^{n-1} - \sum_{k\geq 3} c_k \lambda^k \right)\] where $c_k$ denotes the number of edges in $E_0$ that create a $k$-cycle. Again, equality holds for \(G = K_{d+1}\) and when \(H\) is bipartite then equality also holds for \(G = K_{d,d}\). 
    \end{enumerate}
\end{lem}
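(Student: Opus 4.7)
The plan is to handle the four parts in order, with Part~(3) being the conceptual core; Parts~(1), (2), and (4) are relatively direct.

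For Part~(1), since $H_e = T \cup \{e\}$ is a spanning subgraph of $H$, any injective homomorphism $f : H \to G$ is also an injective homomorphism $f : H_e \to G$ (just forget the extra edges of $H$), giving $\inj(H,G) \leq \inj(H_e,G)$. For $G = K_{d+1}$, every injection $V(H) \to V(K_{d+1})$ is automatically a homomorphism of any graph, so both sides equal $(d+1)d\cdots(d-n+2)$. For $G = K_{d,d}$ with $H$ bipartite, the spanning tree $T$ determines the bipartition of $H_e$, which coincides with that of $H$; thus an injection $V(H) \to V(K_{d,d})$ respects the edges of $H_e$ iff it respects those of $H$, and the two counts agree.

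For Part~(2), $\hom(T,G) = |V(G)| \cdot d^{n-1}$ is standard: pick an image for the root ($|V(G)|$ choices), then map vertices in BFS order with $d$ choices at each step. For $\hom(H_e, G)$, observe that $H_e$ consists of a single $k$-cycle with $n-k$ pendant tree edges; each tree edge contributes a multiplicative factor of $d$, and the cycle contributes $\hom(C_k, G) = \mathrm{tr}(A_G^k) = \sum_{\lambda \in \sigma(G)} \lambda^k$.

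Part~(3) is the key step. Since $E(H) = E(T) \sqcup E_0$, the identity map embeds $\Hom(H_e, G)$ inside $\Hom(T, G)$ as the subset of $T$-homomorphisms with $f(u) f(v) \in E(G)$ for $e = uv$, and $\Hom(H,G) = \bigcap_{e \in E_0} \Hom(H_e, G)$. The union bound on the complementary failure sets $F_e := \Hom(T, G) \setminus \Hom(H_e, G)$ gives
\begin{equation*}
\hom(T, G) - \hom(H, G) = \Bigl| \bigcup_{e \in E_0} F_e \Bigr| \leq \sum_{e \in E_0} |F_e|,
\end{equation*}
which rearranges to the claimed inequality, using $|E_0| = m - n + 1$. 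For $G = K_{d,d}$ with $H$ bipartite, every $T$-homomorphism automatically sends the two sides of $T$'s bipartition to the two sides of $K_{d,d}$; the endpoints of each $e \in E_0$ then land on opposite sides and are automatically adjacent, so $F_e = \emptyset$ for every $e$ and the inequality degenerates to the trivial identity $\hom(T, K_{d,d}) = \hom(H_e, K_{d,d}) = \hom(H, K_{d,d})$. For $G = K_{d+1}$, the set $F_e$ consists of $T$-homomorphisms $f$ with $f(u) = f(v)$; the equality claim amounts to pairwise disjointness of these $F_e$'s, which I would verify by a combinatorial argument showing that any simultaneous collapse of two non-tree edges would force a collapse on some tree edge and hence cannot occur for a genuine $T$-homomorphism.

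Part~(4) is then a direct algebraic consequence of Parts~(2) and (3): starting from the rearranged Part~(3) inequality
\begin{equation*}
-\hom(H, G) \leq -\sum_{e \in E_0} \hom(H_e, G) + (m-n)\hom(T, G),
\end{equation*}
substitute the explicit formulas from Part~(2), group the sum over $e \in E_0$ by cycle length to introduce the multiplicities $c_k$, and collect by eigenvalue to express the right-hand side as a single sum over $\sigma(G)$; the equality statements propagate from Part~(3). I expect the main obstacle to be the equality claim at $K_{d+1}$: the union bound in Part~(3) is not obviously tight for general $H$, since a $T$-homomorphism can in principle collapse several non-tree edges at once, so the crux is to pin down the precise combinatorial disjointness that makes the inequality sharp on $K_{d+1}$ (or to recognize that in the downstream application to \cref{thm:odd_cycles_main}, only the leading-order-in-$d$ behavior is used, which absorbs the slack).
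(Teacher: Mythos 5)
Parts (1), (2), the inequality in part (3), and part (4) all match the paper's argument: your union bound over the failure sets $F_e$ is exactly the paper's pointwise estimate $\prod_i x_i \geq 1-s+\sum_i x_i$ applied to the indicators $x_e=(A_G)_{f(u_e),f(v_e)}$ and summed over $f\in\Hom(T,G)$, and your treatment of the $K_{d,d}$ equality (every $T$-homomorphism into $K_{d,d}$ preserves the bipartition, so $F_e=\emptyset$) is the same as the paper's. The only open issue is the one you flag yourself: the equality at $G=K_{d+1}$ in parts (3) and (4).

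That gap is genuine, and the repair you sketch cannot work. You correctly reduce equality at $K_{d+1}$ to pairwise disjointness of the sets $F_e=\{f\in\Hom(T,K_{d+1}):f(u_e)=f(v_e)\}$, but it is not true that collapsing two non-tree edges forces a collapse of a tree edge. Take $H=K_4$ with the star spanning tree centered at vertex $1$: the map sending $1\mapsto b$ and $2,3,4\mapsto a$ (with $a\neq b$) is a $T$-homomorphism into $K_{d+1}$ that collapses all three non-tree edges. Correspondingly, the stated equality itself fails for $H=K_4$: $\hom(K_4,K_{d+1})=(d+1)d(d-1)(d-2)$, while the right-hand side of part (3) equals $3(d+1)d^2(d-1)-2(d+1)d^3=(d+1)d^2(d-3)$, leaving a discrepancy of $2d(d+1)$. (A non-bipartite example that actually arises as a quotient in the recursion behind \cref{thm:bound} is a triangle and a $4$-cycle sharing one vertex, obtained from $C_7$ by identifying two vertices at distance three.) You should be aware that the paper's own justification --- that ``all of the relevant $(A_G)_{f(u_e),f(v_e)}=1$'' --- tacitly assumes every $T$-homomorphism into $K_{d+1}$ separates the endpoints of each non-tree edge, and so runs into exactly the same obstruction; the equality is automatic only when $|E_0|\leq 1$, in which case the bound is an identity for every $G$. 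Finally, your fallback (that only leading-order behavior in $d$ is used downstream) is not available: the equality at $K_{d+1}$ is precisely what is invoked in step (d) of the proof of \cref{thm:odd_cycles_main}, so any slack here would break the identification of the optimizer.
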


\begin{proof}
The inequality in \eqref{part:1} is trivial, since removing edges just makes it easier for functions to be homomorphisms. That there is equality for $G= K_{d+1}$ is because every injective function $f: V(H) \to V(K_{d+1})$ is, in fact, already a homomorphism, regardless of what edges are there in $H$. The same is true for bipartite $H$ and $G = K_{d,d}$ since we only have to consider functions that preserve the partitions.

To see \eqref{part:2}, note that if a node $v \in V(H_e)$ has degree 1, then $\hom(H_e,G) = d\cdot \hom(H_e - v, G)$ because any homomorphism $f \in \Hom(H_e-v,G)$ can be extended to a homomorphism $\tilde{f} \in \Hom(H_e,G)$ exactly $d$ different ways. Applying this repeatedly we get that $\hom(T,G) = d^{n-1}|V(G)|$ and $\hom(H_e,G) = d^{n-k}\hom(C_k,G)$ where $k\geq 3$ is the length of the unique cycle in $H_e$.  To finish the proof, use the simple identity 
\begin{equation*}\hom(C_k, G) = \mathrm{tr}(A_G^k) = \sum_{\lambda \in \sigma(G)} \lambda^k.
\end{equation*}

For part \eqref{part:3} we are going to use the following crude but obvious estimate. Let $x_1, \dots, x_s \in \{0,1\}$. Then 
\[ \prod_{i=1}^s x_i \geq 1-s+\sum_{i=1}^s x_i\] where equality holds when all $x_i$s are 1. 

For each edge $e \in E_0$ denote its two endpoints by $u_e$ and $v_e$. Note that a function $f : V(H) \to V(G)$ is a homomorphism from $H$ to $G$ if and only if it is a homomorphism from $T$ to $G$ and  
\[\prod_{e \in E_0} (A_G)_{f(u_e), f(v_e)}=1.\]
Hence 
    \begin{align*}
        \hom(H) &= \sum_{f 
        \in \Hom(T,G)}\prod_{e \in E_0} (A_G)_{f(u_e), f(v_e)} \\ &\geq \sum_{f \in \Hom(T,G)} \left( n-m +\sum_{e \in E_0} (A_G)_{f(u_e), f(v_e)}\right)\\
        &= \sum_{e\in E_0} \left(\sum_{f\in \Hom(T,G)} (A_G)_{f(e_u),f(e_v)} \right) - (n-m)\hom(T,G) \\       
 &= \sum_{e \in E_0} \hom(H_e,G) - (n-m)\hom(T,G).
    \end{align*}
    
    When \(G= K_{d+1}\) or when $H$ is bipartite and $G = K_{d,d}$, all of the relevant \((A_G)_{f(e_u),f(e_v)}=1\) and thus there is equality.   

    Finally \eqref{part:4} follows by combining \eqref{part:2} and \eqref{part:3}. 
\end{proof}

%%%%%%%%%%%%%%%%%%%%%%
\subsection{The recursive process}
%%%%%%%%%%%%%%%%%%%%%%

Here we prove \cref{thm:bound} by induction on the size of $H$. The only non-tree graph on at most $3$ nodes is $H = C_3$. But clearly \[\inj(C_3,G) = \hom(C_3,G) = \sum_{\lambda\in \sigma(G)} \lambda^3\] so the result is true. In the bipartite case the smallest non-tree graph is $C_4$.  
that satisfies all the other requirements as well. The only non-tree bipartite graph on at most 4 nodes is $H= C_4$. Here 
\[\inj(C_4,G) = \hom(C_4,G) - 2 \hom(P_3,G)+\hom(P_2,G) = \sum_{\lambda \in \sigma(G)} \lambda^4 -2d^2 + d,\] so, again the result is true. 

We will describe the inductive step in the non-bipartite case first. Suppose $|V(H)| = n$ and that we already proved the existence of the required polynomials for all smaller graphs. Let $H' \leq H$ be a uni-cyclic subgraph of $H$ that is also not bipartite. Then we can combine \cref{lem:hom_ineq} \eqref{part:1} with \cref{prop:hom to inj} twice to get
\begin{multline*}
     \inj(H,G) \leq \inj(H',G) = \hom(H',G) - \sum_{P} \inj(H'/P,G) \\= \hom(H',G) - \sum_P \hom(H'/P,G) + \sum_P \sum_Q \inj(H'/P/Q)
\end{multline*}
We can apply \cref{lem:hom_ineq} \eqref{part:2} to write $\hom(H',G) = \sum_{\lambda \in \sigma(G)} \lambda^k d^{n-k}$ where $k$ is the length of the unique cycle in $H'$ and hence odd. Next, we can apply \cref{lem:hom_ineq} \eqref{part:4} to each $-\hom(H/P,G)$ term in the first sum to get 
\[ -\sum_P \hom(H'/P,G) \leq \sum_{\lambda \in \sigma(G)} q_{H'/P}(\lambda,d)\]
where $q_{H'/P}(\lambda,d) = (|V(H'/P)|-|E(H'/P)|)d^{|V(H'/P|-1} - \sum_j c_j(H'/P) \lambda^j$ is simply the right-hand side of the inequality in \cref{lem:hom_ineq} \eqref{part:4} applied to $H'/P$. Note that these $q_{H'/P}$ polynomials have degree strictly less than $n = |V(H)|$. Finally, we can apply the inductive hypothesis to each $H'/P/Q$ to get a polynomial $p_{H'/P/Q}$ of degree $|V(H'/P/Q)|<n $ that upper-bounds $\inj(H'/P/Q,G)$ with equality for $G = K_{d+1}$. 

Combining these, we get that 
\[ p_H(\lambda,d) = \lambda^k d^{n-k} + \sum_P q_{H'/P}(\lambda,d) + \sum_P \sum_Q p_{H'/P/Q}(\lambda,d)\] is a degree $n$ polynomial such that for some odd $k$ the polynomial $p_H - \lambda^k d^{n-k}$ has degree strictly less than $n$, and 
\[ \inj(H,G) \leq \sum_{\lambda \in \sigma(G)} p_H(\lambda,d)\] as required. Last, but not least, because all inequalities above are satisfied with equality for $G= K_{d+1}$, the same is true for this last one. This finishes the proof of the non-bipartite case.

%%%%%%%%%%%%%%%%%%%%%%
\subsection{The bipartite case}
%%%%%%%%%%%%%%%%%%%%%%

The main difference is that in this case we need to achieve that $p_H$ only contains even powers of $\lambda$. The difficulty lies in that even a bipartite $H$ can generate non-bipartite quotients $H/P$, which in turn could lead to odd $\lambda$ exponents. The key observation is that it is sufficient to find a bounding polynomial that works for all bipartite $G$. 

\begin{lem}
    Let $H$ be a bipartite graph. Suppose $p_H(\lambda,d)$ is a polynomial that only has even $\lambda$-exponents, and such that \cref{eq:inj_bound} holds for all bipartite $G\in \GG_d$. Then it holds for all $G \in \GG_d$.
\end{lem}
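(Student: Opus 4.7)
The plan is to reduce to the bipartite case of the hypothesis via the \emph{bipartite double cover} $\tilde G := G \times K_2$, with vertex set $V(G) \times \{0,1\}$ and edges $(u,0)(v,1)$ whenever $uv \in E(G)$. For any $G \in \GG_d$, the graph $\tilde G$ is $d$-regular and bipartite, so the hypothesis applies:
\[ \inj(H,\tilde G) \leq \sum_{\mu \in \sigma(\tilde G)} p_H(\mu,d). \]
The plan is then to rewrite the right-hand side in terms of $\sigma(G)$ and to bound the left-hand side from below by $2\inj(H,G)$.

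For the spectral side, the block form $A_{\tilde G} = \left(\begin{smallmatrix} 0 & A_G \\ A_G & 0\end{smallmatrix}\right)$ shows that if $A_G\phi = \lambda\phi$ then $(\phi,\pm\phi)^\top$ is an eigenvector of $A_{\tilde G}$ with eigenvalue $\pm\lambda$; hence $\sigma(\tilde G) = \{\pm\lambda : \lambda \in \sigma(G)\}$ as a multi-set. Since by hypothesis $p_H$ has only even $\lambda$-exponents, $p_H(-\lambda,d) = p_H(\lambda,d)$, and therefore
\[ \sum_{\mu \in \sigma(\tilde G)} p_H(\mu,d) = 2\sum_{\lambda \in \sigma(G)} p_H(\lambda,d). \]

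For the injective-hom side, I would show $\inj(H,\tilde G) \geq 2\inj(H,G)$, using that $H$ is connected and bipartite (from the context of \cref{thm:bip_main}). Each injective $g\colon H \to G$ admits at least two injective lifts to $\tilde G$: fix any $v_0 \in V(H)$ and choose the image of $v_0$ from the two-element fiber $\pi^{-1}(g(v_0))$ of the projection $\pi\colon \tilde G \to G$; since $H$ is connected bipartite, this choice determines a homomorphism $f\colon H \to \tilde G$ with $\pi \circ f = g$, whose second coordinate is fixed by the parity of the graph-distance in $H$ from $v_0$. Each lift respects the bipartitions of $H$ and $\tilde G$, so $g$ injective forces $f$ injective on each part of $H$ separately, hence globally. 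Combining the two steps yields $2\inj(H,G) \leq \inj(H,\tilde G) \leq 2\sum_{\lambda \in \sigma(G)} p_H(\lambda,d)$, and dividing by $2$ completes the proof. The only subtle point is the lifting argument; I want to emphasize that the inequality $\inj(H,\tilde G) \geq 2\inj(H,G)$ is all that is needed — equality may fail when $G$ is non-bipartite, since $\tilde G$ can admit injective homs from $H$ that are not lifts of injective homs from $G$, but this direction is fortunately the favorable one.
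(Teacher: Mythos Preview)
Your proof is correct and takes essentially the same approach as the paper's: both pass to the bipartite double cover $\tilde G$, use $\sigma(\tilde G)=\{\pm\lambda:\lambda\in\sigma(G)\}$ together with the evenness of $p_H$ in $\lambda$, and lift injective homomorphisms to obtain $\inj(H,\tilde G)\geq 2\,\inj(H,G)$. The only cosmetic difference is that the paper writes down the two lifts explicitly via the bipartition $A\cup B$ of $H$, whereas you phrase the same construction via a basepoint and distance parity (and invoke connectedness of $H$ from the ambient context).
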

\begin{proof}
    Let $G \in \GG_d$ and define $\tilde{G}$ to be its ``bipartite double cover'': the vertex set $V(\tilde{G}) = \{ x, x' : x \in V(G)\}$ consists of two copies of each original node. The edges are lifted so that they cross between the two copies: $E(\tilde{G}) = \{ (xy'), (x'y) : (xy) \in E(G)\}$. Then $\tilde{G}$ is bipartite, and any injective homomorphism $f \in \Hom(H,G)$ ``lifts'' to two distinct injective homomorphisms $f_1, f_2 \in \Hom(H,\tilde{G})$. Let $A,B$ denote the two parts of $H$. Let $f_1(a) = f(a)$ for all $a \in A$ and $f_1(b) = f(b)'$ for all $b \in B$, and the exact opposite for $f_2$. There may be other injective homomorphisms in $\Hom(H,\tilde{G})$, but at least we get $\inj(H,\tilde{G}) \geq 2 \inj(H,G)$.  At the same time, it is a simple and well-known fact that $\sigma(\tilde{G}) = \{ \lambda, -\lambda : \lambda \in \sigma(G)\}$ as a multi set.  Hence 
    \begin{multline*} \inj(H,G) \leq \frac{1}{2} \hom(H,\tilde{G}) \leq \frac{1}{2} \sum_{\lambda \in \sigma(\tilde{G})} p_H(\lambda, d) \\= \sum_{\lambda \in \sigma(G)} \frac{p_H(\lambda,d)+p_H(-\lambda,d)}{2} = \sum_{\lambda \in \sigma(G)} p_H(\lambda,d)\end{multline*}
    where the last step depended on $p_H$ only having even $\lambda$-exponent terms, hence $p_H(\lambda,d) = p_H(-\lambda,d)$.
\end{proof}

So now we may assume that $G$ is bipartite. That means that for a non-bipartite quotient $H'/P$ in the recursive process the term $\inj(H'/P,G)$ is automatically 0, so we can simply drop it. In other words (using the same notation as in the previous section) we obtain the following:
\begin{multline*}
     \inj(H,G) \leq \inj(H',G) = \hom(H',G) - \sum_{\stackrel{P : H'/P}{\textnormal{bipartite}}} \inj(H'/P,G) \\= \hom(H',G) - \sum_{\stackrel{P : H'/P}{\textnormal{bipartite}}} \hom(H'/P,G) + \sum_{\stackrel{P : H'/P}{\textnormal{bipartite}}} \sum_{\stackrel{Q : H'/P/Q}{\textnormal{bipartite}}} \inj(H'/P/Q)
\end{multline*}
From here the recursive process is the same as in the non-bipartite case. However, since all graphs are bipartite, all cycles are even and we only get even exponents of $\lambda$ as desired. The part about equality in the case of $G = K_{d,d}$ also follows in the same way using the relevant parts of \cref{lem:hom_ineq}.

%%%%%%%%%%%%%%%
\section{The Optimization Lemmas}\label{sec:opt}
%%%%%%%%%%%%%%%%

In this subsection we prove \cref{lem:even_opt} and \cref{lem:odd_opt}. At the end, we show \cref{prop:vertexwise}.

\begin{proof}[Proof of \cref{lem:even_opt}]
Recall that \(X\) denotes a random variable with values in \([-d,d]\) and with 0 mean and second moment \(d\). Let \(Y = (X/d)^2\). Then \(0\leq Y \leq 1\) and \(\E Y = 1/d\). Furthermore, it is easy to see that for any such \(Y\) there is a unique \(-d \leq X \leq d\) with \(\E X =0, \E X^2 = d\) such that \(Y= (X/d)^2\). 
Let us expand the given degree \(n\) polynomial \(p\) as   
\[p(x,d) = x^{2m} d^{n-2m} + \sum_{j \geq 1} \sum_{k=0}^{(n-j)/2} c_{j,k} x^{2k} d^{n-j-2k} \] and define \(q(y,d)\) as 
\[q(y,d) = \frac{1}{d^n} \cdot p(d\sqrt{y},d) = y^m + \sum_{j \geq 1} \sum_{k=0}^{(n-j)/2} c_{j,k} y^k d^{-j}. \]
With this notation, we can write 
\[ \E p(X,d) = d^n \E q(Y,d),\] 
so it suffices to show that for large enough \(d\) we have \(\E q(Y,d) \leq \E q(Y_{K{d,d}},d)\). Here, naturally, \(Y_{K_{d,d}}= (X_{K_{d,d}}/d)^2\) is the random variable whose value is 1 with probability \(1/d\) and 0 otherwise. 

Let us temporarily fix \(d\), and abbreviate \(q(y) := q(y,d)\). Consider the linear function 
\begin{align*}
    L(y) = q(0)+y\cdot (q(1)-q(0))
\end{align*}
that intersects the graph of \(q(y)\) at \(y=0\) and \(y=1\). Suppose that 
\begin{equation}\label{eq:Lq_bound}
    L(y) \geq q(y)\ \mbox{ on \([0,1]\) and }\ L(y) > g(y)\ \mbox{ on \((0,1)\).}
\end{equation}
    Then, clearly, 
\begin{multline*} \E q(Y) \leq \E L(Y) = q(0) + ((q(1)-q(0))\E Y =\\= q(0) + ((q(1)-q(0))\E Y_{K_{d,d}} = \E L(Y_{K_{d,d}}) = \E q(Y_{K,{d,d}}), \end{multline*}
since \(\E Y = \E Y_{K_{d,d}} = 1/d\), and since \(Y_{K_{d,d}}\) takes its values in \(\{0,1\}\) and \(L = q\) at these points. And equality holds if and only if \(Y\) is concentrated on \(\{0,1\}\). But since \(\E Y = 1/d\), this only happens if \(Y = Y_{K{d,d}}\) and thus \(X = X_{K_{d,d}}\).

It remains to show that \cref{eq:Lq_bound} holds for \(d\) large enough. This will essentially follow from the structure of \(q\), that all but the first term contains \(d^j\) for some \(j \geq 1\), so all these terms become small when \(d\) is large. More precisely, for any \(0< y <1\)

\begin{alignat*}{2}
&L(y)& = q(0) + y\cdot (q(1)-q(0)) &> q(y) \\
&\Leftrightarrow& \frac{q(1) - q(0)}{1} &> \frac{q(y) - q(0)}{y} = y^{m-1} + \sum_{j \geq 1}\sum_{k=1}^{(n-j)/2} c_{j,k} y^{k-1} d^{-j}\\
&\Leftrightarrow& 1-y^{m-1} &> \sum_{j \geq 1} \sum_{k=1}^{(n-j)/2} c_{j,k} (y^{k-1}-1) d^{-j}\\
&\Leftrightarrow& 1+y+y^2+\dots+y^{m-2} &> \sum_{j \geq 1} \sum_{k=1}^{(n-j)/2} -c_{j,k} (1+y+y^2+\dots+y^{k-2}) d^{-j} 
\end{alignat*}
The left hand side is at least 1, while the right hand side is a fixed length finite sum where each term contains at least a \(1/d\). Thus, clearly, setting \(d\) large enough will achieve that the supremum of the right hand side is less than 1. 
\end{proof}

\begin{proof}[Proof of \cref{lem:odd_opt}] 
There are only minor differences compared to the previous proof. We set \(Y = X/d\) so that \(-1 \leq Y \leq 1, \E Y =0\), and \(\E Y^2 = 1/d\). It is convenient to expand \(p\) and define \(q\) as follows:
\[p(x,d) = x^{2m+1} d^{n-2m-1} + \sum_{j \geq 1} \sum_{k=0}^{(n-j} c_{j,k} x^k d^{n-j-k}, \] 
\[q(y,d) = \frac{1}{d^n} \cdot p(d\cdot y,d) = y^{2m+1} + \sum_{j \geq 1} \sum_{k=0}^{n-j} c_{j,k} y^k d^{-j}. \]
Then, as before, \(\E p(X,d) = d^n \E q(Y,d)\), and again it suffices to show that \(\E q(Y,d) \leq \E q(Y_{K_{d+1}})\) for \(d\) large enough. Here \(Y_{K_{d+1}} = X_{K_{d+1}}/d\) is the random variable taking \(1\) with probability \(1/d\) and \(-1\) otherwise.

The main difference now is that since the first and second moments are fixed, we need to consider majorizing parabolas instead of lines. We will look for a quadratic polynomial \(L(y)\) that is tangent to \(q\) at \(y_0=-1/d\) and intersects \(q\) at 1. Hence, using the \(q(y) = q(y,d)\) abbreviation, we let 
\[ L(y) = q(y_0) + (y-y_0) q'(y_0) + (y-y_0)^2\cdot \frac{q(1) - q(y_0) - (1-y_0)q'(y_0)}{(1-y_0)^2}. \]
Later we will show that \(L(y) > q(y)\) for all \(-1 \leq y < 1, y\neq -1\) when \(d\) is large enough. Once we know this, we can argue similarly to the last proof. Since \(\E L(Y)\) depends only on the first two moments of \(Y\), and these are equal for \(Y\) and \(Y_{K_{d+1}}\), we find
\[ \E q(Y) \leq \E L(Y)  = \E L(Y_{K_{d+1}}) = \E q(Y_{K_{d+1}}),\]
and equality holds if and only if \(Y\) is concentrated on \(\{-1/d,1\}\), hence \(Y \sim Y_{K_{d+2}}\).

It remains to show that \(L(y) > q(y)\) on \((-1, 1)\setminus\{-1/d\}\). The reason is, again, similar to the previous proof: it is true for the leading term \(y^{2m+1}\) and the rest are small due to \(d\) being large. The calculation, however, is considerably more complicated. \(L(y) > q(y)\) is equivalent to 
\[ \frac{q(1) - q(y_0) - (1-y_0)q'(y_0)}{(1-y_0)^2} > \frac{q(y) - q(y_0) - (y-y_0)q'(y_0)}{(y-y_0)^2}.\]
Let 
\[ \hat{q}(y) := \frac{q(y) - q(y_0) - (y-y_0)q'(y_0)}{(y-y_0)^2},\] then our goal is to show that \( \hat{q}(y) < \hat{q}(1)\) for all \(-1 \leq y<1\), which is equivalent to 
\[ \frac{\hat{q}(1)-\hat{q}(y)}{1-y}  > 0.\]
\begin{claim} A simple, but lengthy computation shows that for \(y\neq y_0\)
\[ \frac{y^k - y_0^k - (y-y_0) k y_0^{k-1}}{(y-y_0)^2} = \sum_{a=0}^{k-2} (a+1)y_0^a y^{k-2-a}.\] 
\end{claim}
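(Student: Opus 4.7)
The plan is a direct algebraic computation via two successive applications of the standard factorization $y^k - y_0^k = (y-y_0)\sum_{i=0}^{k-1} y^{k-1-i} y_0^i$, followed by a reindexing of a double sum. There is no conceptual obstacle here; the only thing to be careful with is the bookkeeping.

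First I would divide the numerator by a single factor of $(y-y_0)$. Using the factorization above together with the trivial identity $k y_0^{k-1} = \sum_{i=0}^{k-1} y_0^{k-1}$, and noting that the $i = k-1$ summand $y_0^{k-1}(y^0 - y_0^0)$ vanishes, I obtain
$$\frac{y^k - y_0^k}{y - y_0} - k y_0^{k-1} \;=\; \sum_{i=0}^{k-1} y^{k-1-i}y_0^i - \sum_{i=0}^{k-1} y_0^{k-1} \;=\; \sum_{i=0}^{k-2} y_0^i \bigl(y^{k-1-i} - y_0^{k-1-i}\bigr).$$

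Next I would apply the same factorization to each inner difference, writing $y^{k-1-i} - y_0^{k-1-i} = (y-y_0)\sum_{j=0}^{k-2-i} y^{k-2-i-j} y_0^j$, and then divide by the remaining factor of $(y-y_0)$. This yields
$$\frac{y^k - y_0^k - (y-y_0) k y_0^{k-1}}{(y-y_0)^2} \;=\; \sum_{i=0}^{k-2}\sum_{j=0}^{k-2-i} y^{k-2-i-j}\, y_0^{i+j}.$$

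Finally I would collect terms by the total exponent $a := i+j$ of $y_0$. For each fixed $a \in \{0,1,\ldots,k-2\}$, the admissible pairs $(i,j)$ are $i = 0,1,\ldots,a$ with $j = a-i$ (the upper bound $j \le k-2-i$ being automatic from $a \le k-2$), giving exactly $a+1$ pairs, each contributing the same monomial $y^{k-2-a} y_0^a$. Summing over $a$ gives $\sum_{a=0}^{k-2}(a+1)y_0^a y^{k-2-a}$, which is the claimed identity. As a sanity check one could also verify this by noting that the numerator is the second-order Taylor remainder of $y\mapsto y^k$ at $y_0$, so $(y-y_0)^2$ divides it and the quotient is a polynomial of degree $k-2$ whose leading coefficient is $1$ and whose value at $y=y_0$ is $\binom{k}{2}y_0^{k-2}$, matching the right-hand side.
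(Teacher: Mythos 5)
Your computation is correct and is precisely the ``simple, but lengthy computation'' the paper leaves to the reader: two applications of the factorization \(y^m - y_0^m = (y-y_0)\sum_{j=0}^{m-1} y^{m-1-j}y_0^j\) followed by reindexing the double sum along \(a = i+j\), with all the bookkeeping (the vanishing \(i=k-1\) term, the automatic upper bound \(j \le k-2-i\), the count of \(a+1\) pairs) handled correctly. The Taylor-remainder sanity check at \(y = y_0\), giving \(\binom{k}{2}y_0^{k-2}\) on both sides, is a nice confirmation.
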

Plugging this into the expansion of \(q\) we get that 
\begin{multline*}
\hat{q}(y) = \frac{q(y) - q(y_0) - (y-y_0)q'(y_0)}{(y-y_0)^2} =\\ = \sum_{a=0}^{2m-1} (a+1) y_0^a y^{2m-1-a} + \sum_{j \geq 1} \sum_{k=0}^{n-j}\sum_{a=0}^{k-2} c_{j,k} (a+1) y_0^a y^{k-2-a} d^{-j}
\end{multline*}  is also a polynomial in \(y\) of total degree \(n-2\). Recall that \(y_0 = -1/d\), hence there is a single monomial in \(\hat{q}\) that does not contain at least one factor of \(1/d\), and that term comes from the first sum when picking \(a=0\).  Thus, \(\hat{q}(y)\) is of the form
\[ \hat{q}(y) = y^{2m-1} + \sum_{j\geq 1}\sum_{k=0}^{n-2-j} \hat{c}_{k,j} y^k d^{-j},\] so 
\[ \frac{\hat{q}(1)-\hat{q}(y)}{1-y} = 1+y+\dots+y^{2m-2} + \sum_{j\geq 1}\sum_{k=0}^{n-2-j} \hat{c}_{k,j}(1+y+\dots+y^{k-1})d^{-j} \]
The \(1+y+\dots+y^{2m}\) portion has a positive infimum on \([-1,1]\) while the second part is a fixed polynomial where each term contains at least one \(1/d\) factor. So its supremum on \([-1,1]\) is \(O(1/d)\). Thus, for large enough \(d\) this expression is positive on \([-1,1]\), and that finishes the proof.
\end{proof}

\begin{claim}\label{cl:all_d}
When \(p=\lambda^k\), the above proofs can be considerably shortened and improved: Descartes' rule of signs applied to the polynomial \(q(y) - L(y)\) shows that there will not be further intersections between \(q\) and \(L\) other than the two that are there by design. As such, \(L > q\) holds on the claimed interval regardless of the value of \(d\).
\end{claim}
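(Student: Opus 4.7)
The plan is to apply Descartes' rule of signs to the polynomial $r(y) := q(y) - L(y)$ in each of the two cases treated in \cref{lem:even_opt} and \cref{lem:odd_opt}, exploiting the fact that when $p(\lambda,d) = \lambda^k d^{n-k}$ is a single leading monomial, the rescaled polynomial $q$ is a pure power of $y$ and so $r$ has only a handful of nonzero terms. This makes the sign pattern of $r$ transparent enough that, once the roots of $r$ built into the construction of $L$ are accounted for, Descartes' rule leaves no room for further real roots, so no large-$d$ hypothesis is required.

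In the even case $k = 2m$, one has $q(y) = y^m$ and $L(y) = q(0) + y(q(1)-q(0)) = y$, hence $r(y) = y^m - y$. Its coefficient pattern $+, 0, \dots, 0, -, 0$ exhibits a single sign change, so Descartes' rule forces at most one positive real root; since $y = 1$ is a root by design, $r$ has no root in $(0,1)$, and evaluating at any interior point gives $r < 0$ there. Thus $L > q$ on $(0,1)$ for every $d$.

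In the odd case $k = 2m+1$, one has $q(y) = y^{2m+1}$ and $L(y) = A + By + Cy^2$ is the unique quadratic tangent to $q$ at $y_0 = -1/d$ and passing through $(1,1)$. After solving the resulting linear system for $A, B, C$ explicitly, the one elementary input is the nonnegativity $A, B, C \ge 0$ for every $d \ge 2$, which reduces to the single-variable inequality $d^{2m+1} \ge (2m+1)d + 2m$ (established by a short induction starting at $d = 2$, where equality holds when $m = 1$). Granting this, $r(y) = y^{2m+1} - Cy^2 - By - A$ has sign pattern $+, 0, \dots, 0, -, -, -$ with exactly one sign change, hence at most one positive real root---so $y = 1$ is the only one. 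Substituting $y \mapsto -y$ turns the pattern into $-, 0, \dots, 0, -, +, -$ with two sign changes, hence at most two negative real roots counted with multiplicity---so the designed double root at $y_0 = -1/d$ exhausts them. Since $r$ has leading coefficient $+1$, $r \to +\infty$ as $y \to +\infty$, and since the only sign change on the real line occurs at $y = 1$, we conclude $r < 0$ on $[-1,1) \setminus \{y_0\}$, i.e.\ $L > q$ there, for every $d \ge 2$.

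The only genuine obstacle is the nonnegativity check for $A, B, C$ for all admissible $d$; the remainder of the argument is immediate from Descartes' rule and dispenses entirely with the perturbative large-$d$ bookkeeping used in the original proofs.
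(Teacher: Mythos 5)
Your proposal is correct and follows exactly the route the paper's claim gestures at: writing $r=q-L$ explicitly, reading off its sign pattern, and invoking Descartes' rule to show the designed roots (at $0$ and $1$ in the even case; the double root at $y_0=-1/d$ and the simple root at $1$ in the odd case) exhaust all real roots, so the sign of $r$ is constant on the relevant interval. The paper supplies no further details, and your computation (including the check $C\ge 0$, equivalent to $d^{2m+1}\ge(2m+1)d+2m$, which in fact is not even essential since one sign change in $r$ and two in $r(-y)$ persist for any sign of $C$ once $A,B>0$) fills them in correctly.
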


It is easy to show that \(K_{d,d}\) maximizes the number of closed walks of even length vertex-wise, in a similar fashion to the proof of \cref{prop:Petersen}. We show that the same holds true for \(K_{d+1}\) and odd length walks.

\begin{proof}[Proof of \cref{prop:vertexwise}]
Let \(k\) be odd. The number of length-\(k\) closed walks from \(v\) to \(v\) in a graph \(G\) is equal to \((A_G)^k_{v,v}\), which can be written as 
\[ A^k_{v,v} = \sum_{i=1}^n \lambda_i^k \phi_i(v)^2,\] where \(\phi_i\) is the unit eigenvector of \(A_G\) corresponding to the eigenvalue \(\lambda_i\). Clearly \(\sum_i \phi(v)^2 = 1\), thus we can let \(X\) be the random variable whose value is \(\lambda_i\) with probability \(\phi_i(v)^2\). Then \(-d\leq X \leq d\) and
\begin{align*} E X &= \sum_i \lambda_i \phi_i(v)^2 = (A_G)_{v,v} = 0  &&\mbox{and}& \E X^2 &= \sum_i \lambda_i^2 \phi_i(v)^2 = (A_G)^2_{v,v} = d.
\end{align*}
Thus, Claim~\ref{cl:all_d} implies that \[(A_G)^k_{v,v} = \E X^k \leq \E X_{K_{d+1}}^k = (A_{K_{d+1}})^k_{w,w}\] for all \(d\geq 2\),  with equality if and only if the component of \(v\) is isomorphic to \(K_{d+1}\).
\end{proof}

\appendix

\section{Maximizing 5-cycles} \label{sec:appendix}
\subsection{Example for 5-cycles}
In this section, we explicitly show \cref{thm:odd_cycles_main} for \(H=C_5\), show the computation from \cref{prop:inj to hom}, and express this in terms of the spectrum of \(G\). Moreover, we compute how large \(d\) has to be for the result to hold.

Notice that, up to isomorphism, the only quotients of \(C_5\) with no self loops are \(C_5\), \(K_3 + e\), and \(K_3\), where \(K_3+e\) is a triangle with an antenna. These have one, five, and five partitions, respectively.

\begin{prop}\label{5-cycles}
For \(d\geq7\) and \(n=c(d+1)\), the \(d\)-regular graph with the highest \(C_5\)-density is copies of \(K_{d+1}\). For \(d=3\), then the optimal graph is copies of the Petersen graph.
\end{prop}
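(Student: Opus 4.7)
I would begin by specializing the general machinery of Sections 2--4 to $H=C_5$ and reducing the problem to an explicit one-variable optimization. The first step is to unpack $\inj(C_5,G)$ via \cref{prop:inj to hom}. The partitions $P$ of $V(C_5)$ fall into four classes: the trivial one (contributing $+\hom(C_5,G)$), the five partitions identifying a single pair of non-adjacent vertices (each contributing $-\hom(K_3+e,G)$, where $K_3+e$ denotes the triangle with a pendant), the five partitions identifying two disjoint non-adjacent pairs (each contributing $+\hom(K_3,G)$), and all remaining partitions, which collapse an adjacent pair, introduce a loop in $C_5/P$, and hence vanish against simple $G$. Combining with $\hom(C_5,G)=\sum\lambda^5$, $\hom(K_3,G)=\sum\lambda^3$, and the leaf-removal formula $\hom(K_3+e,G) = d\sum\lambda^3$ of \cref{lem:hom_ineq} \eqref{part:2}, one recovers the identity
\[\inj(C_5,G) \;=\; \sum_{\lambda\in\sigma(G)}\bigl(\lambda^5 + (5-5d)\lambda^3\bigr).\]

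Next, the bivariate polynomial $p(x,y) = x^5 - 5x^3y + 5x^3$ satisfies the hypotheses of \cref{lem:odd_opt}: total degree $n=5$, odd $k=5$, and a single top-degree monomial $x^5$ with coefficient $1$. Hence for some threshold $d_p$ and every $d\geq d_p$, copies of $K_{d+1}$ uniquely maximize $\tinj(C_5,\cdot)$ among $d$-regular graphs, uniqueness coming from the spectral determinacy of $K_{d+1}$ cited in the introduction. The crux of the proposition is to show that one may take $d_p \leq 7$. I would rerun the proof of \cref{lem:odd_opt} with this explicit $p$: form $q(y) = y^5 + (5d^{-2}-5d^{-1})y^3$, build the quadratic $L(y)$ tangent to $q$ at $y_0 = -1/d$ and passing through $(1, q(1))$, and verify $L > q$ on $[-1,1]\setminus\{y_0,1\}$. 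Writing $L-q$, a degree-$5$ polynomial with a double root at $y_0$ and a simple root at $y=1$, as $(y-y_0)^2(y-1)R(y,d)$ for some quadratic $R$ with negative leading coefficient, the desired inequality reduces to checking $R < 0$ on $[-1,1]$. This is a polynomial inequality in $d$ alone, and I expect it to hold precisely for $d\geq 7$.

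For $d=3$, the optimizer is given by \cref{prop:Petersen}. Since $\tinj(C_5,\cdot)$ on a disjoint union is a convex combination of component densities, the overall optimum over all $3$-regular graphs is attained if and only if every component realizes the maximum connected density, forcing disjoint copies of the Petersen graph.

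The main obstacle is the explicit threshold analysis at $d=7$. Unlike the pure-monomial situation of \cref{cl:all_d}, here the coefficient $5-5d$ of $\lambda^3$ scales linearly in $d$ and is large and negative, so the Descartes-rule shortcut does not apply and the competition between the $\lambda^5$ and $\lambda^3$ terms has to be tracked carefully at moderate $d$. The factorization of $L-q$ and sign analysis of the residual quadratic $R$ is the only substantial calculation in the proof; the examples beating $K_{d+1}$ at $d=4,5,6$ alluded to in the introduction confirm that the threshold $d_0=7$ is sharp.
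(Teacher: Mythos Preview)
Your proposal is correct and follows essentially the same route as the paper: derive the exact spectral identity $\inj(C_5,G)=\sum_\lambda(\lambda^5+(5-5d)\lambda^3)$ via \cref{prop:inj to hom}, then invoke the machinery of \cref{lem:odd_opt} and verify the threshold $d_p=7$ by checking that the majorizing quadratic $L$ stays above $q$ on $[-1,1]$. Your factorization $L-q=(y-y_0)^2(y-1)R(y)$ with $R$ quadratic is simply a more explicit rendering of what the paper calls ``by inspection''. The one genuine difference is the $d=3$ case: the paper handles it by solving the spectral optimization directly at $d=3$ and observing that the optimum is the Petersen spectrum (hence, by spectral uniqueness, copies of the Petersen graph), whereas you defer to the combinatorial \cref{prop:Petersen} and pass to disjoint unions via convexity of $\tinj$; both arguments are valid, but the paper's version is self-contained within the spectral framework and in fact yields a slightly stronger statement.
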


\begin{proof}
    By \cref{prop:inj to hom}, we calculate
    \begin{align*}
    \inj(C_5) = \hom(C_5) - 5\hom(K_3 + e) + 5\hom(K_3),
    \end{align*}

    Then, since \(G\) is \(d\)-regular, we have \(\hom(K_3 + e) = d\hom(K_3)\). Hence, by \cref{lem:odd_opt},
    \begin{align*}
    \max  \sum_{i=1}^n \lambda_i^5 + (5-5d)\lambda_i^3
    \end{align*}
    is solved by the spectrum of copies of \(K_{d+1}\), for \(d\) sufficiently large. For \(d=3\), a standard calculation shows that the solution is the spectrum of the Petersen graph, which is spectrally unique. This gives us a stronger version of \cref{prop:Petersen}.
    
    By inspection, for $H=C_5$, the threshold from the optimization \cref{lem:odd_opt} is \(d=7\) as the quadratic from \cref{lem:odd_opt} is above \(x^5+(5-5d)x^3\) for \(|x|\leq d\) for \(d\geq 7\).
\end{proof}

For \(d=4,5,6\) the solution to the optimization problem does not correspond to the spectrum of a graph. Moreover, for \(d=4,5\) we have found examples of graphs with a higher \(5\)-cycle density than $K_{d+1}$, and for $d=6$ we have examples that match \(K_7\). For \(d=4\), the octahedral graph, the circulants \(C_k(2,3)\) for \(k=7,9,12,13\) as well as \(C_7(1,2), \ K_3 \square K_3 \), and the graphs in \cref{fig:d=4 examples} all achieve the highest 5-cycle density we have found after an extensive search. For \(d=5\), the best we have found is the complement of \(K_3\sqcup C_5\). Lastly, for \(d=6\), we found that \(K_7, K_{3,3,3}\), and \(K_8\) minus a perfect matching all have the same 5-cycle density. 

We believe that for general $H$ and smaller $d$, it is easier to find counterexamples because the range where \(q(x)<p(x)\) is larger, thus making it easier to find graphs with these `optimal' eigenvalues.
\begin{figure}
    \centering
    \begin{tikzpicture}[scale=0.6]
      % Define nodes with black filled circles arranged symmetrically
      \node[circle, fill=black, inner sep=1.5pt] (0) at (-1, 0) {};
      \node[circle, fill=black, inner sep=1.5pt] (1) at (1, 0) {};
      \node[circle, fill=black, inner sep=1.5pt] (2) at (0, -1.73) {};
      \node[circle, fill=black, inner sep=1.5pt] (3) at (-2.73, -1) {};
      \node[circle, fill=black, inner sep=1.5pt] (4) at (-1.73, -2.73) {};
      \node[circle, fill=black, inner sep=1.5pt] (5) at (1.73, -2.73) {};
      \node[circle, fill=black, inner sep=1.5pt] (6) at (2.73, -1) {};
      \node[circle, fill=black, inner sep=1.5pt] (7) at (1, 2) {};
      \node[circle, fill=black, inner sep=1.5pt] (8) at (-1, 2) {};
      
      % Draw edges
      \draw (0) -- (2);
      \draw (2) -- (1);
      \draw (1) -- (0);
      \draw (0) -- (8);
      \draw (8) -- (7);
      \draw (7) -- (1);
      \draw (1) -- (6);
      \draw (6) -- (5);
      \draw (5) -- (2);
      \draw (2) -- (4);
      \draw (4) -- (3);
      \draw (3) -- (0);
      \draw (8) -- (3);
      \draw (4) -- (5);
      \draw (6) -- (7);
      \draw (5) -- (8);
      \draw (7) -- (4);
    \end{tikzpicture}
    \hspace{1cm}
    \begin{tikzpicture}[rotate=90, scale=0.5]
      % Define nodes with black filled circles
      \node[circle, fill=black, inner sep=1.5pt] (0) at (0, 3) {};
      \node[circle, fill=black, inner sep=1.5pt] (1) at (-2, 2) {};
      \node[circle, fill=black, inner sep=1.5pt] (2) at (2, 2) {};
      \node[circle, fill=black, inner sep=1.5pt] (3) at (-3, 0) {};
      \node[circle, fill=black, inner sep=1.5pt] (4) at (-3, -2) {};
      \node[circle, fill=black, inner sep=1.5pt] (5) at (-2, -4) {};
      \node[circle, fill=black, inner sep=1.5pt] (6) at (0, -5) {};
      \node[circle, fill=black, inner sep=1.5pt] (7) at (2, -4) {};
      \node[circle, fill=black, inner sep=1.5pt] (8) at (3, -2) {};
      \node[circle, fill=black, inner sep=1.5pt] (9) at (3, 0) {};
      
      % Draw edges
      \draw (0) -- (1);
      \draw (0) -- (3);
      \draw (0) -- (6);
      \draw (0) -- (8);
      \draw (1) -- (2);
      \draw (1) -- (7);
      \draw (1) -- (8);
      \draw (3) -- (9);
      \draw (3) -- (5);
      \draw (3) -- (7);
      \draw (4) -- (6);
      \draw (4) -- (7);
      \draw (4) -- (8);
      \draw (4) -- (9);
      \draw (5) -- (8);
      \draw (5) -- (9);
      \draw (5) -- (2);
      \draw (6) -- (9);
      \draw (6) -- (2);
      \draw (7) -- (2);
    \end{tikzpicture}
    \caption{4-regular graphs with high \(C_5\) density}
    \label{fig:d=4 examples}
\end{figure}

\begin{proof}[Proof of \cref{prop:Petersen}]
    We show that the maximum number of 5-cycles through an edge \(e=(u,v)\) of a 3-regular graph \(G\) is 5 and that all such extremal graphs have a very specific structure around $e$.

    All 5-cycles through \(e\) are of the form
    \begin{align*}
        y \rightarrow x \rightarrow v \stackrel{e}{\rightarrow} u \rightarrow z \rightarrow y.
    \end{align*}
Since \(G\) is 3-regular, there are at most four pairs of nodes that can play the role of \(x,z\). Denote the two other neighbors of $v$ by \(x_i : i=1,2\) and the two other neighbors of $u$ by \(z_j : j=1,2\).  The number of 5-cycles through any given pair \((x_i,z_j)\) is 0 if they coincide, and otherwise it is the number of common neighbors between them. 

There are three possibilities of how the \(x_i\) and the \( z_j\) can coincide. We consider each case separately.

\begin{enumerate}
\item If both \(x_1=z_1, x_2=z_2\), then only the \((x_1,z_2)\) and \((x_2,z_1\) pairs can contribute to 5-cycles, and only 1 each because they only have one neighbor that is not \(u,v\). Thus in this case there are only two 5-cycles through \(e\). (See \cref{fig:3}). 
\item Next, without loss of generality, suppose \(x_1=z_1\) but \(x_2 \neq z_2\). Then, there are at most three distinct pairs that can support a 5-cycle: \((x_1, z_2), (x_2, x_1)\), and \((x_2, z_2)\). But the node \(x_1\) has only one neighbor different from $u,v$, hence the \((x_1, z_2)\) and \(( z_1,x_2 )\) pairs can only have one 5-cycle each. Finally, the \((x_2, z_2)\) pair can have at most two 5-cycles (see  \cref{fig:3} for the maximal case), which still only gives four 5-cycles through \(e\).

\item Now, suppose \(x_i\neq z_j\) for \(i=1,2\). In order for $e$ to have at least five 5-cycles, one pair must be involved in two of them. Let this pair be \(x_1,z_1\). Then there are two nodes $y_1,y_2$ that are both common neighbors of this pair. Now both $y_1,y_2$ can only be adjacent to one of $x_2,z_2$ each, since they have degree 3. So the $(x_1,z_2)$ and $(x_2,z_1)$ pairs can only support at most one 5-cycle each. If the $(x_2,z_2)$ pair also supports two 5-cycles, then there must exist nodes $y_3,y_4$ that are common neighbors of $x_2,z_2$, hence they are distinct from $y_1,y_2$ and thus the $(x_1,z_2)$ and $(x_2,z_1)$ pairs support zero 5-cycles.  

To summarize, in order to achieve at least five 5-cycles, we need at least one pair \((x_1,z_2)\) to support two of those, but if we have another pair also supporting two then that limits us to at most four 5-cycles in total. So the only way to get five is to have one supported on each of the other pairs. But this forces the exact structure of 
\cref{fig:figure 1}, which has at least 9 vertices and a cut-edge.  
\end{enumerate}

We have shown that the maximum number of 5-cycles through \(e\) is 5 and any maximal case has the exact local structure of \cref{fig:figure 1}, including a cut edge with no 5-cycles through it. It follows from the rigidity of that structure that no other edge can, in fact, support five 5-cycles, and the cut edge supports 0. Thus the local 5-cycle density is strictly less than 4.

Thus copies of the Petersen graph indeed have the largest possible density of 5-cycles, which is 4.
\end{proof}
\begin{figure}[H]
    \centering
    \begin{tikzpicture}
      % Define nodes - black filled circles
      \node[circle, fill=black, inner sep=1.5pt, label=below:$u$] (0) at (-1, -2) {};
      \node[circle, fill=black, inner sep=1.5pt, label=below:$v$] (1) at (1, -2) {};
      \node[circle, fill=black, inner sep=1.5pt,label=below:$x_1$] (2) at (0, -0.73) {};
      \node[circle, fill=black, inner sep=1.5pt, label=right:$x_2$] (3) at (0, 0) {};
      \node[circle, fill=black, inner sep=1.5pt] (4) at (-1, -0.5) {};
      
      % Empty node (unfilled circle)
      \node[circle, draw=white, fill=white, inner sep=1.5pt] (5) at (-1.5, 0.25) {};
      
      % Draw edges
      \draw (0) -- (1);
      \draw (1) -- (2);
      \draw (2) -- (0);
      \draw (0) -- (3);
      \draw (3) -- (1);
      \draw (2) -- (4);
      \draw (3) -- (4);
      \draw (5) -- (4);
    \end{tikzpicture}
    \hspace{2cm}
    \begin{tikzpicture}
      % Eight vertices in a regular octagon
      \node[circle, fill=black, inner sep=1.5pt] (0) at (90:0.5) {};
      \node[circle, fill=black, inner sep=1.5pt, label=left:$z_2$] (1) at (162:1) {};
      \node[circle, fill=black, inner sep=1.5pt, label=below:$u$] (2) at (234:1) {};
      \node[circle, fill=black, inner sep=1.5pt, label=below:$v$] (3) at (306:1) {};
      \node[circle, fill=black, inner sep=1.5pt, label=right:$x_2$] (4) at (18:1) {};
      \node[circle, fill=black, inner sep=1.5pt, label=right:$x_1$] (5) at (0,0){};
      \node[circle, fill=black, inner sep=1.5pt] (6) at (0, 1){};
      \node[circle, draw = white, fill=white, inner sep=1.5pt] (7) at (-0.75, 1.25){};

      \draw (0) -- (1);
      \draw (1) -- (2);
      \draw (2) -- (3);
      \draw (3) -- (4);
      \draw (4) -- (0);

      \draw (0) -- (5);
      \draw (2) -- (5);
      \draw (3) -- (5);

      \draw (0) -- (6);
      \draw (1) -- (6);
      \draw (4) -- (6);
      \draw (6) -- (7);
    \end{tikzpicture}
    \caption{Maximal cases for \(x_1=z_1,~x_2=z_2\) and \(x_1=z_1,~x_2 \neq z_2\).}
    \label{fig:3}
\end{figure}

\printbibliography
\end{document}